\documentclass[12pt,reqno]{amsart}
\usepackage{amsmath,amsthm,amsfonts,amssymb}
\usepackage{graphicx}
\usepackage[lite]{amsrefs}
\usepackage[all]{xy}

\addtolength{\hoffset}{-1.4cm} \addtolength{\textwidth}{2.85cm}
\addtolength{\voffset}{-0.4cm} \addtolength{\textheight}{0.8cm}

\newtheorem{theorem}{Theorem}[section]
\newtheorem{lemma}[theorem]{Lemma}

\newtheorem{proposition}[theorem]{Proposition}

\newcommand{\vkq}[1]{[ #1 ]}
\newcommand{\vkqq}[1]{\left[ #1 \right]}
\newcommand{\vkqqq}[1]{\big[ #1 \big]}
\newcommand{\vkqqqq}[1]{\Big[ #1 \Big]}
\newcommand{\vkw}[1]{\langle #1 \rangle}

\newcommand{\vkwww}[1]{\big\langle #1 \big\rangle}
\newcommand{\vkwwww}[1]{\Big\langle #1 \Big\rangle}
\newcommand{\vkn}{\mathrm{left}}
\newcommand{\vkm}{\mathrm{right}}
\newcommand{\vkord}{\mathrm{ord}}
\newcommand{\vkcomp}{\mathrm{comp}}
\newcommand{\vkparti}{\mathrm{part}}

\numberwithin{equation}{section}

\begin{document}

\title[Bellman function technique for multilinear estimates]
{Bellman function technique for multilinear estimates and an application to generalized paraproducts}
\author{Vjekoslav Kova\v{c}}
\address{Department of Mathematics, UCLA, Los Angeles, CA 90095-1555}
\email{vjekovac@math.ucla.edu}

\keywords{paraproduct, multilinear operator, Bellman function, integer partition}
\subjclass[2010]{42B20}

\begin{abstract}
We prove $\mathrm{L}^p$ estimates for a class of two-dimensional multilinear forms
that naturally generalize (dyadic variants of) both classical paraproducts
and the twisted paraproduct introduced in \cite{DT} and studied in \cite{Be} and \cite{K1}.
The method we use builds on the approach from \cite{K1} and we present it as a rather general
technique for proving estimates on dyadic multilinear operators.
In the particular application to ``generalized paraproducts'' this method is
combined with combinatorics of integer partitions.
\end{abstract}

\maketitle

\section{Introduction and formulation of the result}
\label{vksec1paraproductforms}

\emph{Paraproducts} first appeared in the work of Pommerenke \cite{P}, who showed that
\,$h'=f' g$\, implies \,$\|h\|_{\mathrm{H}^2} \leq C \,\|f\|_{\mathrm{BMOA}} \|g\|_{\mathrm{H}^2}$\,
for analytic functions on the unit disk.
They were named and used extensively by Bony \cite{Bn} in the context of his theory of para\-differential operators.
Since then, many variants have been studied and they have proven to be a useful concept in various mathematical disciplines.
We choose the formulation appearing in \cite{T2}.
A multilinear form is called a \emph{(classical) model paraproduct} if it is given by
$$ \Lambda_{1\mathrm{DP}}(f_1,\ldots,f_n)
= \sum_{I\in\mathcal{D}} \,|I|^{1-\frac{n}{2}}\,
\prod_{i=1}^{n} \big\langle f_i, \varphi^{(i)}_I \big\rangle_{\mathrm{L}^2(\mathbb{R})} \\
= \sum_{I\in\mathcal{D}} \,|I|^{1-\frac{n}{2}}
\int_{\mathbb{R}^n} \prod_{i=1}^{n} f_i(x_i)\varphi^{(i)}_I\!(x_i)dx_i , $$
where $\mathcal{D}$ is the collection of dyadic intervals and each $\varphi^{(i)}_I$ is
a smooth bump function adapted\footnote{Consult \cite{T2} for rigorous definitions of the notions
\emph{bump function} and \emph{adapted}. This paper deals with dyadic models only.}
to the interval $I$.
We also assume that for each $I$ at least two of the functions
$\varphi^{(1)}_I,\ldots,\varphi^{(n)}_I$ have mean zero.
Classical Calder\'{o}n-Zygmund theory establishes the estimate\footnote{For two nonnegative quantities
$A$ and $B$, we write $A\lesssim_P B$ if $A\leq C_P B$ holds for some constant $C_P\geq 0$ depending on
a set of parameters $P$.}
$$ |\Lambda_{1\mathrm{DP}}(f_1,\ldots,f_n)| \,\lesssim_{n,(p_i)}\, \prod_{i=1}^{n}\|f_i\|_{\mathrm{L}^{p_{i}}(\mathbb{R})} $$
for the exponents satisfying $\sum_{i=1}^{n}\frac{1}{p_i}=1$ and $1<p_i<\infty$;\, see \cite{T2}.
One can consult \cite{St} for applications and \cite{Be2},\,\cite{MTT4} for more recent
extensions and references.

Significant conceptual complications that do not seem to have been extensively studied
arise as soon as one proceeds to higher dimensions.
For expositional and notational simplicity we will only work with functions on $\mathbb{R}^2$.
Furthermore, our method significantly uses the dyadic structure, so it is appropriate
to replace each smooth bump function $\varphi^{(i)}_I$
either by the Haar scaling function $\varphi^{\mathrm{D}}_I$ or by the Haar wavelet $\psi^{\mathrm{D}}_I$.
The latter two are defined simply as
$$ \varphi^{\mathrm{D}}_I:=|I|^{-1/2}\mathbf{1}_{I}\quad\textrm{and}\quad
\psi^{\mathrm{D}}_I:=|I|^{-1/2}(\mathbf{1}_{I_\vkn}-\mathbf{1}_{I_\vkm}) . $$
Here, $I_\vkn$ and $I_\vkm$ respectively denote left and right halves of a dyadic interval $I$
and we use the notation $\mathbf{1}_A$ for the characteristic function of a set $A\subseteq\mathbb{R}$.
Note that the function $\psi^{\mathrm{D}}_I$ has mean zero.

An obvious two-dimensional analogue of the \emph{dyadic model paraproduct} is
\begin{align}
& \Lambda_{2\mathrm{DP}}(F_1,\ldots,F_n)
= \sum_{I\times J\in\mathcal{C}} \!|I|^{2-n}
\Big(\prod_{i\in S} \left\langle F_i, \psi^{\mathrm{D}}_I\!\otimes\!\varphi^{\mathrm{D}}_J
\right\rangle_{\mathrm{L}^2(\mathbb{R}^2)} \!\Big)
\Big(\prod_{i\in S^c} \left\langle F_i, \varphi^{\mathrm{D}}_I\!\otimes\!\varphi^{\mathrm{D}}_J
\right\rangle_{\mathrm{L}^2(\mathbb{R}^2)} \!\Big) \nonumber \\[-1.5mm]
& = \sum_{I\times J\in\mathcal{C}} |I|^{2-n} \int_{\mathbb{R}^{2n}}
\Big(\prod_{i\in S} \psi^{\mathrm{D}}_I(x_i)\Big)
\Big(\prod_{i\in S^c} \varphi^{\mathrm{D}}_I(x_i)\Big)
\Big(\prod_{i=1}^{n} F_i(x_i,y_i)\varphi^{\mathrm{D}}_J(y_i)dx_i dy_i\Big) ,
\label{vkeqclassicalpprod}
\end{align}
where $\mathcal{C}$ denotes the collection of all dyadic squares in $\mathbb{R}^2$,
$$ \mathcal{C} := \Big\{ \big[2^{k} l_1,2^{k} (l_1\!+\!1)\big)\times\big[2^{k} l_2,2^{k} (l_2\!+\!1)\big)
\,:\, k,l_1,l_2\in\mathbb{Z} \Big\} , $$
and \,$S\subseteq\{1,2,\ldots,n\}$,\, $|S|\geq 2$,\,
$S^c := \{1,2,\ldots,n\}\setminus S$.
This object still falls into the realm of Calder\'{o}n-Zygmund theory.

More interesting analogue was proposed by Demeter and Thiele in \cite{DT},
while investigating the two-dimensional bilinear Hilbert transform.
They raised a question of proving $\mathrm{L}^p$ estimates for
$$ \Lambda_{\mathrm{TP}}(F,G,H) \!= \!\int_{\mathbb{R}^2} \sum_{k\in\mathbb{Z}} 2^{2k}
\Big(\!\int_{\mathbb{R}}\! F(x\!-\!s,y)\varphi(2^k s)ds\!\Big)
\Big(\!\int_{\mathbb{R}}\! G(x,y\!-\!t)\psi(2^k t)dt\!\Big) H(x,y) dx dy, $$
where $\varphi,\psi$ are smooth bump functions and
\,$\mathrm{supp}(\hat{\psi}) \subseteq \{\xi\in\mathbb{R} \,:\, {\textstyle\frac{1}{2}}\!\leq\! |\xi|\leq 2\}$.\,
After a conditional result by Bernicot \cite{Be}, first $\mathrm{L}^p$ bounds for this multilinear form
were established in \cite{K1}.
The form is called the \emph{twisted paraproduct} and its dyadic variant can be written as
\begin{align}
& \Lambda_{\mathrm{DTP}}(F,G,H) = \sum_{I\times J\in\mathcal{C}}
\int_{\mathbb{R}^4}\! F(x_2,y_1) G(x_1,y_2) H(x_1,y_1) \nonumber \\[-2mm]
& \qquad\qquad\qquad\qquad\qquad\quad\ \
\psi^{\mathrm{D}}_{I}(x_1)\psi^{\mathrm{D}}_{I}(x_2) \varphi^{\mathrm{D}}_{J}(y_1)\varphi^{\mathrm{D}}_{J}(y_2)
\,dx_1 dx_2 dy_1 dy_2 .
\label{vkeqtwistedpprod}
\end{align}
Indeed, the approach in \cite{K1} was to control the dyadic version first and then relate the continuous version
to the dyadic one.

In this paper we define a class of two-dimensional multilinear forms
that naturally generalize both classical paraproducts (\ref{vkeqclassicalpprod})
and the twisted paraproduct (\ref{vkeqtwistedpprod}).
Then we prove $\mathrm{L}^p$ estimates in a certain range of exponents depending on the structure of
the particular multilinear form.
It turns out convenient to associate the new paraproduct-type forms to finite bipartite undirected graphs.
This already indicates a certain amount of combinatorial reasoning appearing in the proof.

\smallskip
Let $m,n$ be positive integers and choose
$$ E\subseteq \{1,\ldots,m\}\times\{1,\ldots,n\}, \quad S\subseteq \{1,\ldots,m\}, \quad T\subseteq \{1,\ldots,n\} . $$
It will be convenient to represent $E$ as the set of edges of a simple bipartite undirected graph
with vertices $\{x_1,\ldots,x_m\}$ and $\{y_1,\ldots,y_n\}$,
where $x_i$ and $y_j$ are connected by an edge if and only if $(i,j)\in E$.
Also, we regard elements of $S$ and $T$ as ``selected'' vertices from these two vertex-sets respectively.
We additionally require \,$|S|\geq 2$\, or \,$|T|\geq 2$.
\vspace*{-4.5mm}
\begin{figure}[th]
\begin{center}\includegraphics[width=0.315\textwidth]{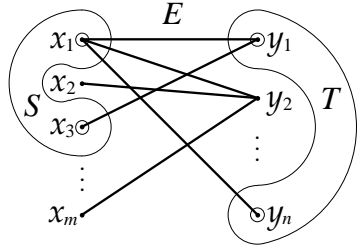}\end{center}
\vspace*{-3mm}
\caption{Graph interpretation of a triple $(E,S,T)$. Selected vertices are circled.}
\label{vkfiguregraph}
\end{figure}

To each such triple $(E,S,T)$ we associate a multilinear form $\Lambda=\Lambda_{E,S,T}$ acting on $|E|$ functions by
\begin{align*}
& \Lambda\big((F_{i,j})_{(i,j)\in E}\big) \ := \sum_{I\times J\in\mathcal{C}}
|I|^{2-\frac{m+n}{2}} \int_{\mathbb{R}^{m+n}} \Big(\prod_{(i,j)\in E}\!\! F_{i,j}(x_i,y_j)\Big) \\
& \qquad\quad\Big(\!\prod_{i\in S}\!\psi^{\mathrm{D}}_I(x_i)\!\Big)\! \Big(\!\prod_{i\in S^c}\!\varphi^{\mathrm{D}}_I(x_i)\!\Big)\!
\Big(\!\prod_{j\in T}\!\psi^{\mathrm{D}}_J(y_j)\!\Big)\! \Big(\!\prod_{j\in T^c}\!\varphi^{\mathrm{D}}_J(y_j)\!\Big)
\,dx_1\ldots dx_m \,dy_1\ldots dy_n .
\end{align*}
To make sure that the above summand is well-defined for each $I\times J\in\mathcal{C}$
and that all of the succeeding arguments are valid,
we suppose that $F_{i,j}$ are measurable, bounded, and compactly
supported functions.
%\footnote{Absolute convergence
%of the series $\sum_{I\times J\in\mathcal{C}}$ will be a part of Theorem \ref{vktheoremmainbound}.}
Since this is only a qualitative assumption, any quantitative bounds can be extended by density arguments.
The normalization $|I|^{2-(m+n)/2}$ is chosen so that $\Lambda$ is invariant under simultaneous dyadic dilations:
$$ \Lambda\big((\mathrm{D}_{2^l}F_{i,j})_{(i,j)\in E}\big) \,=\, 2^{2l}\,\Lambda\big((F_{i,j})_{(i,j)\in E}\big) , $$
where $l\in\mathbb{Z}$ and $(\mathrm{D}_{2^l}F)(x,y):=F(2^{-l}x,2^{-l}y)$.

Informally, we say that the functions appear in a certain ``twisted'', ``entwined'', or ``entangled'' way
in the definition of $\Lambda$.
One can describe the structure of $\Lambda$ in words:
\begin{itemize}
\item[(1)]
Every edge $(x_i,y_j)$ contributes with a function $F_{i,j}$.
\item[(2)]
Each vertex, $x_i$ or $y_j$, carries a ``dyadic bump function'' (either $\varphi^{\mathrm{D}}$ or $\psi^{\mathrm{D}}$).
\item[(3)]
Selected vertices carry ``dyadic bump functions'' of mean zero (i.e.\@ $\psi^{\mathrm{D}}$).
\item[(4)]
At least one bipartition class, $\{x_1,\ldots,x_m\}$ or $\{y_1,\ldots,y_n\}$, contains at least two
selected vertices.
\end{itemize}
The last condition is an analogue of the standard cancellation condition for classical paraproducts.

\smallskip
Now we state a boundedness result for these forms.
The bipartite graph determined by $E$ splits into connected components, i.e.\@ maximal connected subgraphs.
Let $d_{i,j}$ denote larger size of the two bipartition classes of the connected component
containing an edge $(x_i,y_j)$.

\begin{theorem}
\label{vktheoremmainbound}
Let $(E,S,T)$ and $(d_{i,j})_{(i,j)\in E}$ be as above.
The series $\sum_{I\times J\in\mathcal{C}}$ defining $\Lambda$ converges absolutely and
the form $\Lambda$ satisfies the estimate
\begin{equation}
\label{vkeqlpestimate}
\big|\Lambda\big((F_{i,j})_{(i,j)\in E}\big)\big|
\ \lesssim_{m,n,(p_{i,j})} \prod_{(i,j)\in E}\!\! \|F_{i,j}\|_{\mathrm{L}^{p_{i,j}}(\mathbb{R}^2)}
\end{equation}
whenever the exponents $(p_{i,j})_{(i,j)\in E}$ are such that
\,$\sum_{(i,j)\in E}\frac{1}{p_{i,j}}=1$\, and \,$d_{i,j}\!<\!p_{i,j}\!<\!\infty$\, for each $(i,j)\in E$.
\end{theorem}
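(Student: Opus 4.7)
My plan follows the Bellman function / telescoping strategy developed for the twisted paraproduct in \cite{K1}, combined with an induction organized around the combinatorial structure of the bipartite graph $(E,S,T)$. A first observation is that the integrand in $\Lambda$ factors over the connected components of the graph and that $d_{i,j}$ is itself a component-wise invariant, so one should be able to reduce the problem by H\"older's inequality to the case of a single connected component on which the cancellation hypothesis $|S|\geq 2$ or $|T|\geq 2$ is in force; the remaining components would be handled as two-dimensional dyadic paraproducts of the Calder\'on--Zygmund type (\ref{vkeqclassicalpprod}). After this reduction I may assume $(E,S,T)$ is connected, so $d:=d_{i,j}=\max(m,n)$ is a single parameter, and it remains to prove $\mathrm{L}^{p_{i,j}}$ estimates under the single constraint $p_{i,j}>d$.

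Next, I would expand the sum over dyadic squares $I\times J\in\mathcal{C}$ by invoking the standard Haar identities that relate $\sum_{|I|=2^k}\psi^{\mathrm{D}}_I(x)\psi^{\mathrm{D}}_I(x')$ and $\sum_{|I|=2^k}\varphi^{\mathrm{D}}_I(x)\varphi^{\mathrm{D}}_I(x')$ to the kernels of dyadic conditional expectations $\mathrm{E}_k$ and of the corresponding martingale differences. This recasts $\Lambda$ as a telescoping sum over scales of a ``local multilinear form'' in which martingale differences are attached to the selected vertices and plain conditional expectations to the unselected ones, on each of the two sides of the bipartition. The cancellation hypothesis guarantees that at least two martingale differences sit on one side at every scale, which is precisely the input needed to run the single-scale Cauchy--Schwarz manipulation pioneered in \cite{K1}.

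The heart of the argument, and the place where the combinatorics of integer partitions enters, is an induction on the pair $(|S|,|T|)$. At each step, on the side with at least two selected vertices, I would ``pair off'' two of them by a single-scale Cauchy--Schwarz, and thereby replace $\Lambda$ by a new multilinear form $\Lambda'$ in which one selected vertex has been eliminated, at the price of duplicating certain edges of the graph. The combinatorial task is to prescribe a systematic order of pairings so that (a) absolute convergence and the telescoping identity are preserved at every intermediate step, (b) the new forms $\Lambda'$ are again of the same structural type with their own parameter $d'\leq d$, and (c) the effective H\"older exponents stay in the allowed range $p_{i,j}>d$ throughout. Indexing these pairings by an integer partition of the vertex degrees on the larger bipartition class, one should obtain a tree of reductions that eventually terminates at base cases where only one side carries selected vertices; such base cases fall under classical Calder\'on--Zygmund theory and are handled by standard dyadic square-function estimates.

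The principal obstacle is the sharp threshold $p>d$ rather than the naive threshold $p>|E|$ that a blind induction would produce: a careless Cauchy--Schwarz spread across the entire component would cost a factor of $|E|$ on the exponent side. To recover $p>d$, every Cauchy--Schwarz must be confined to the $d$ edges incident to a single vertex of the larger bipartition class, so one must show that \emph{some} such ordering of selected-vertex pairings exists for every connected bipartite graph satisfying the cancellation hypothesis. Making this existence precise---equivalently, constructing the right integer partition of the degree sequence---is, I expect, the main difficulty, and it is where the combinatorial work announced in the abstract is carried out. Once the reduction scheme is in place, the sum over scales is controlled by dyadic Burkholder--Gundy inequalities, and the absolute convergence of the defining series of $\Lambda$ follows a posteriori by applying the same bound to $|F_{i,j}|$.
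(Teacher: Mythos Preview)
Your proposal has the right flavor (telescoping, Cauchy--Schwarz pairings, a combinatorial induction, and some integer-partition bookkeeping), but the induction scheme you describe does not close, and the base case is misidentified.

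First, the base case. You say that once only one side carries selected vertices the form ``falls under classical Calder\'on--Zygmund theory.'' This is exactly what does \emph{not} happen: the twisted paraproduct $\Lambda_{\mathrm{DTP}}$ itself has $T=\emptyset$ and $S=\{1,2\}$, and it is not a Calder\'on--Zygmund object. So your induction would terminate at precisely the hard case the paper is built around. Similarly, for the component reduction, the ``remaining components'' are not classical paraproducts; in the paper they are simply bounded pointwise (by Lemma~\ref{vklemmahoelder}) after normalization, and when the cancellation is spread over two components one uses $|AB|\le\frac12 A^2+\frac12 B^2$ rather than H\"older.

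Second, the induction parameter. Pairing off two selected vertices by Cauchy--Schwarz/AM--GM turns $\langle A\rangle_{x_1}\langle B\rangle_{x_2}$ into $\frac12\langle A\rangle_{x_1}^2+\frac12\langle B\rangle_{x_2}^2$; after rewriting $\langle A\rangle_{x_1}^2=\langle A(x_1,\cdot)A(x_1',\cdot)\rangle_{x_1,x_1'}$ you have \emph{not} decreased $(|S|,|T|)$: you have concentrated two selected variables at copies of the same vertex and simultaneously changed the multiplicities with which the original $x_i$'s occur. The correct progress measure is therefore not $(|S|,|T|)$ but the \emph{partition type} $(a_1^\ast,\ldots,a_m^\ast;\,b_1^\ast,\ldots,b_n^\ast)$ of those multiplicities, ordered lexicographically. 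The paper enlarges the class of terms to $\{\mathcal{A}^{(\mathbf{p})}:\mathbf{p}\in\Omega_{m,n}\}$, proves a reduction lemma (Lemma~\ref{vklemmareduction}) that moves strictly down in this order modulo a $\delta$-loss, and then assembles a single Bellman function $\mathcal{B}$ with $\sum_{\mathbf{p}}|\mathcal{A}^{(\mathbf{p})}|\le\Box\mathcal{B}$ by induction on the rank (Lemma~\ref{vklemmafinal}). No classical Calder\'on--Zygmund input is used at the bottom of the induction; the base case is the partition with all mass at one vertex, handled directly by Theorem~\ref{vktheoremterms} and Lemma~\ref{vklemmaterms}.

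Finally, the global step is not Burkholder--Gundy. The telescoping yields a \emph{local} (single-tree) estimate (Proposition~\ref{vkpropositiontree}) with right-hand side $|Q_{\mathcal{T}}|\prod_{(i,j)}\max_{Q}\vkq{F_{i,j}^{d_{i,j}}}_Q^{1/d_{i,j}}$; the passage to global $\mathrm{L}^{p_{i,j}}$ bounds is a stopping-time decomposition of $\mathcal{C}$ into trees according to level sets of the dyadic maximal functions $\mathrm{M}_{d_{i,j}}F_{i,j}$, and it is exactly the condition $p_{i,j}>d_{i,j}$ that makes $\mathrm{M}_{d_{i,j}}$ bounded on $\mathrm{L}^{p_{i,j}}$. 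Your concern about $p>d$ versus $p>|E|$ is resolved not by restricting each Cauchy--Schwarz to $d$ edges, but by the fact that the Bellman function $\mathcal{B}$ is controlled via Lemma~\ref{vklemmahoelder}, which only needs the $\max(m,n)$-th power of each function.
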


\smallskip
Let us comment on a couple of already familiar particular instances.
\begin{list}{\labelitemi}{\setlength{\leftmargin}{0em}
\setlength{\itemsep}{5pt}\setlength{\itemindent}{0em}}
\item[]\emph{Classical paraproducts.}\ \,
$m=n$,\, $E=\big\{(i,i) : i\in\{1,\ldots,n\}\big\}$,\, $|S|\geq 2$,\, $T=\emptyset$,\, $d_{i,i}=1$.
\vspace*{-4.5mm}
\begin{figure}[ht]
\begin{center}\includegraphics[width=0.233\textwidth]{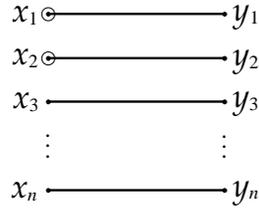}\end{center}
\vspace*{-3.5mm}
\caption{Bipartite graph corresponding to a classical paraproduct.}
\end{figure}

\noindent
This special case leads to ordinary two-dimensional dyadic paraproducts (\ref{vkeqclassicalpprod})
and Theorem \ref{vktheoremmainbound} yields the inequality
$$ |\Lambda_{2\mathrm{DP}}(F_1,\ldots,F_n)| \,\lesssim_{n,(p_i)}\, \prod_{i=1}^{n}\|F_{i}\|_{\mathrm{L}^{p_{i}}(\mathbb{R}^2)} $$
for \,$\sum_{i=1}^{n}\frac{1}{p_i}=1$, \,$1<p_i<\infty$.

\item[]\emph{Twisted paraproduct.}\ \,
$m=n=2$,\,  $E=\big\{(1,1),(1,2),(2,1)\big\}$,\, $S=\{1,2\}$,\,  $T=\emptyset$,\, $d_{1,1}=d_{1,2}=d_{2,1}=2$.
\vspace*{-4.5mm}
\begin{figure}[ht]
\begin{center}\includegraphics[width=0.236\textwidth]{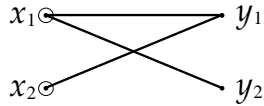}\end{center}
\vspace*{-3.5mm}
\caption{Bipartite graph corresponding to the twisted paraproduct.}
\end{figure}

\noindent
This case is exactly the dyadic variant of the twisted paraproduct (\ref{vkeqtwistedpprod})
and Theorem \ref{vktheoremmainbound} claims the estimate
$$ |\Lambda_{\mathrm{DTP}}(F,G,H)| \,\lesssim_{p,q,r}
\|F\|_{\mathrm{L}^{p}(\mathbb{R}^2)}\|G\|_{\mathrm{L}^{q}(\mathbb{R}^2)}\|H\|_{\mathrm{L}^{r}(\mathbb{R}^2)} $$
in the range \,$\frac{1}{p}+\frac{1}{q}+\frac{1}{r}=1$,\, $2\!<\!p,q,r\!<\!\infty$.\,
``Fiber-wise'' Calder\'{o}n-Zygmund decomposi\-tion of Bernicot \cite{Be}
extends the estimate to the range \,$1\!<\!p,q\!<\!\infty$,\, $2\!<\!r\!\leq\!\infty$\,
and even outside the Banach triangle.
\end{list}

Since our method is naturally adjusted to dyadic scales,
we confine ourselves to bounding dyadic model sums only and do not discuss their continuous analogues.
Also note that the decomposition from \cite{Be} does not apply in general,
so there does not seem to be a pre-existing result allowing an extension of the exponent range.
See Section \ref{vksec6remarks} for further discussion of that matter.

Sections \ref{vksec3singletree} and \ref{vksec4maintheorem} are devoted to the proof of
Theorem \ref{vktheoremmainbound}, while Section \ref{vksec2bellmanfunctions} develops
the general technique we apply in the proof.
To aid understanding we explain the key steps on a concrete example in Section \ref{vksec5example}.
The closing Section \ref{vksec6remarks} contains remarks on limitations of the approach and
further problems.

\section{Bellman functions in multilinear setting}
\label{vksec2bellmanfunctions}

In this section we set up the Bellman function scheme appropriate for proving certain estimates
for multilinear operators acting on two-dimensional functions.
We work in $\mathbb{R}^2$, but the material easily generalizes to higher dimensions.

The Bellman function theory in harmonic analysis was invented by Burkholder \cite{Bu}
and developed in \cite{NT},\,\cite{NTV2},\,\cite{NTV1},
and the subsequent papers by the same authors and their collaborators.
The main difference in our setup
is that we do not insist on optimality conditions and our approach can only be used to establish
positive results about boundedness of operators.
This simplifies the theory, since otherwise the corresponding Bellman functions in the sense of optimal control theory
would necessarily have to be ``infinite-dimensional'', i.e.\@ would not depend on finitely many scalar parameters.

\smallskip
Each dyadic square $Q\in\mathcal{C}$ partitions into four congruent dyadic squares that are called
\emph{children} of $Q$, and conversely, $Q$ is said to be their \emph{parent}.
Our method requires working with structured families of dyadic squares called \emph{finite convex trees}.
A \emph{tree} is a collection $\mathcal{T}$ of dyadic squares such that there exists $Q_\mathcal{T}\in\mathcal{T}$,
called the \emph{root} of $\mathcal{T}$, satisfying $Q\subseteq Q_\mathcal{T}$ for every $Q\in\mathcal{T}$.
A tree $\mathcal{T}$ is \emph{convex} if $Q_1\subseteq Q_2\subseteq Q_3$ and $Q_1,Q_3\in\mathcal{T}$ imply $Q_2\in\mathcal{T}$.
Informally, convex trees ``do not skip any scales''.
A \emph{leaf} of $\mathcal{T}$ is a square that is not contained in $\mathcal{T}$, but its parent is.
The family of leaves of $\mathcal{T}$ will be denoted $\mathcal{L}(\mathcal{T})$.
Note that leaves of any finite convex tree partition the root.
\begin{figure}[th]
\begin{center}
\includegraphics[width=0.26\textwidth]{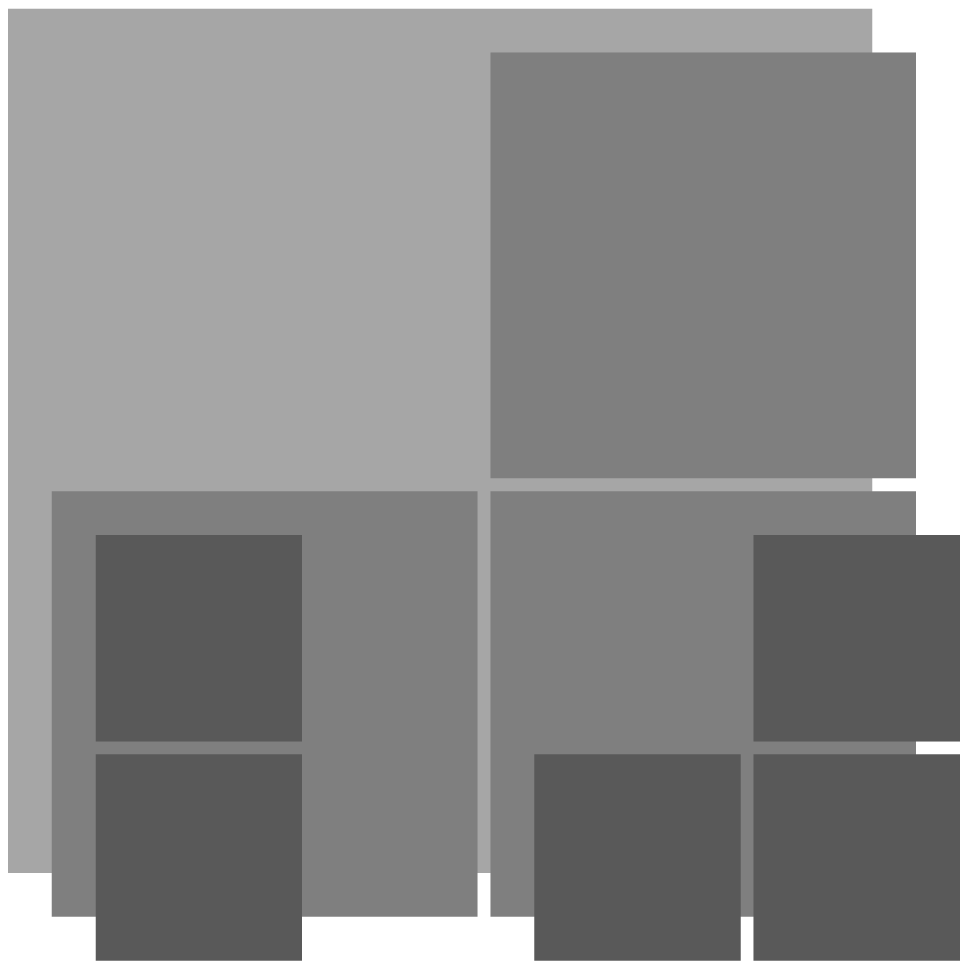}\qquad\qquad
\includegraphics[width=0.26\textwidth]{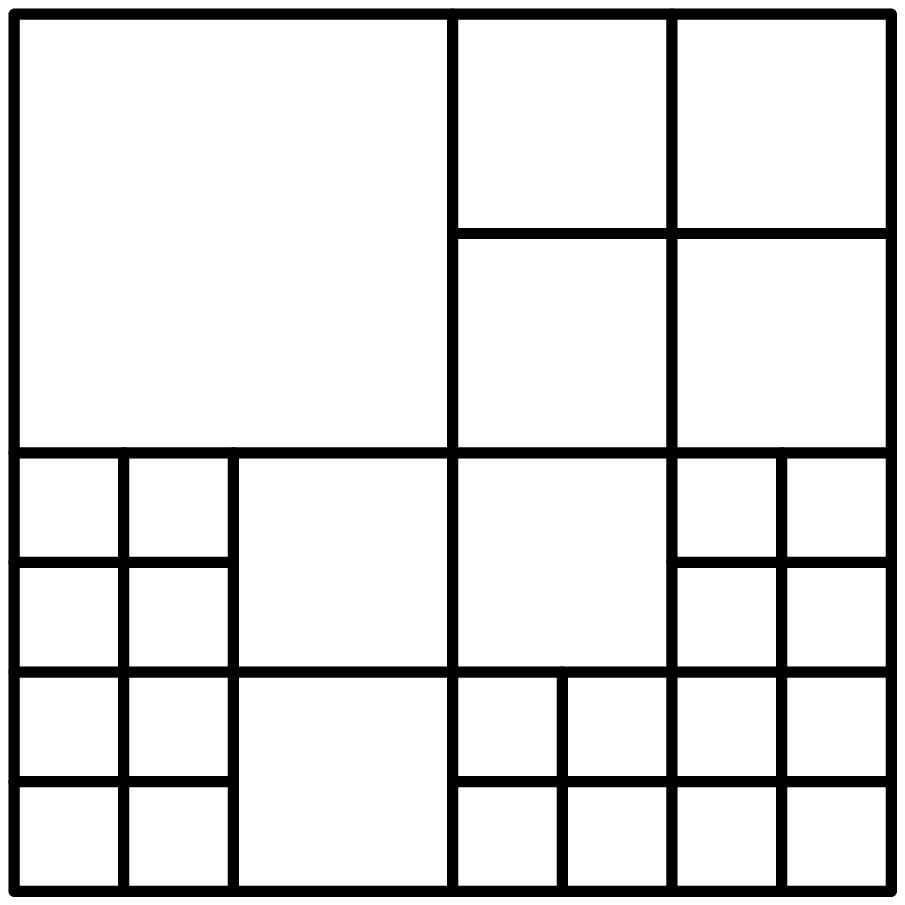}
\vspace*{-2mm}
\caption{A finite convex tree (left) and partition of its root into leaves (right).}
\end{center}
\end{figure}

All of the following functions are assumed to be nonnegative, bounded, and measurable.
For a function $f$ on a dyadic interval $I$ we denote
\begin{align*}
\vkq{f}_I = \vkq{f(x)}_{x\in I} & := \frac{1}{|I|}\int_{I}f(x)dx , \\
\vkw{f}_I = \vkw{f(x)}_{x\in I} & := \frac{1}{|I|}\Big(\int_{I_\vkn}\!\!\!\!f(x)dx - \int_{I_\vkm}\!\!\!\!f(x)dx\Big) .
\end{align*}
We prefer to emphasize the variable in which the average is taken, as for instance
in $\vkq{F(x,y)}_{x\in I}$ and $\vkw{F(x,y)}_{x\in I}$ one deals with functions of more than one variable.
We simply write $\vkq{F(x,y)}_x$ and $\vkw{F(x,y)}_x$ if the interval $I$ is a generic one or is understood.
Notational shortcuts such as
\begin{align*}
\vkw{\Phi(x,x',y)}_{x,x',y} & = \vkwww{\vkwww{\vkwww{\Phi(x,x',y)}_{x\in I}}_{x'\in I}}_{y\in J} , \\
\vkqqq{\Phi(x_1,x_2,\ldots)}_{x_i\in I\textrm{ for }1\leq i\leq n}
& = \vkqqq{\ldots\vkqqq{\vkqqq{\Phi(x_1,x_2,\ldots)}_{x_1\in I}}_{x_2\in I}\ldots}_{x_n\in I}
\end{align*}
are also allowed.
Finally, for a dyadic square $Q=I\times J$ and a function $F$ on it we write
$$ \vkq{F}_Q := \frac{1}{|Q|}\int_{Q}F(x,y)\,dx dy = \vkqqq{F(x,y)}_{x\in I,\, y\in J} . $$

\smallskip
Let us now turn to (multilinear and multi-sublinear) operators we want to study.
A broad class of interesting objects can be reduced to
\begin{equation}
\label{vkeqgeneraldef}
\Lambda_{\mathcal{T}}(F_1,\ldots,F_l) := \sum_{Q\in\mathcal{T}} \,|Q|\,\mathcal{A}_Q (F_1,\ldots,F_l) ,
\end{equation}
where $\mathcal{T}$ is a finite convex tree of dyadic squares and
$\mathcal{A} = \mathcal{A}_Q (F_1,\ldots,F_l)$ is some (usually ``scale-invariant'') quantity
depending on several two-dimensional functions $F_1,\ldots,F_l$ and a square $Q\in\mathcal{T}$.
Several examples are the terms appearing in the right column of Table \ref{vktableexamples}.
Sometimes we want to deal with sums over infinite collections of squares $Q$,
so for this purpose we do not allow any constants to depend on $\mathcal{T}$.

Let $\mathcal{B} = \mathcal{B}_Q (F_1,\ldots,F_l)$ be another quantity
depending on $F_1,\ldots,F_l$ and $Q\in\mathcal{T}$.
We define the \emph{first order difference} of $\mathcal{B}$, denoted
$\Box\mathcal{B}=\Box\mathcal{B}_{Q} (F_1,\ldots,F_l)$, as
\begin{align*}
\Box\mathcal{B}_{I\times J} (F_1,\ldots,F_l)
:= \ & \frac{1}{4}\mathcal{B}_{I_\vkn\times J_\vkn} (F_1,\ldots,F_l)
+ \frac{1}{4}\mathcal{B}_{I_\vkn\times J_\vkm} (F_1,\ldots,F_l) \\
\ + & \frac{1}{4}\mathcal{B}_{I_\vkm\times J_\vkn} (F_1,\ldots,F_l)
+ \frac{1}{4}\mathcal{B}_{I_\vkm\times J_\vkm} (F_1,\ldots,F_l) \\[1mm]
\ - & \mathcal{B}_{I\times J} (F_1,\ldots,F_l) .
\end{align*}
For instance,
\begin{equation}
\label{vkeqsimpleex}
\mathcal{B}_{I\times J} (F) := \vkqqq{\vkq{F(x,y)}_{x\in I}^3}_{y\in J}
\end{equation}
leads to
\begin{align}
\Box\mathcal{B}_{I\times J} (F) =
& \ \frac{1}{4}\vkqqq{\vkq{F(x,y)}_{x\in I_\vkn}^3}_{y\in J_\vkn} + \frac{1}{4}\vkqqq{\vkq{F(x,y)}_{x\in I_\vkn}^3}_{y\in J_\vkm} \nonumber \\
& + \frac{1}{4}\vkqqq{\vkq{F(x,y)}_{x\in I_\vkm}^3}_{y\in J_\vkn} + \frac{1}{4}\vkqqq{\vkq{F(x,y)}_{x\in I_\vkm}^3}_{y\in J_\vkm}
- \vkqqq{\vkq{F(x,y)}_{x\in I}^3}_{y\in J} \nonumber \\
= & \ \frac{1}{2}\vkqqq{\vkq{F(x,y)}_{x\in I_\vkn}^3}_{y\in J} + \frac{1}{2}\vkqqq{\vkq{F(x,y)}_{x\in I_\vkm}^3}_{y\in J}
- \vkqqq{\vkq{F(x,y)}_{x\in I}^3}_{y\in J} \nonumber \\
= & \ 3 \vkqqq{\vkq{F(x,y)}_{x\in I} \vkw{F(x,y)}_{x\in I}^2}_{y\in J} . \label{vkeqsimpleex3}
\end{align}
Above we used obvious identities
\begin{equation*}
\begin{array}{ll}
\vkq{f(x)}_{x\in I} & \!\!\!\!\! = \frac{1}{2}\big(\vkq{f(x)}_{x\in I_\vkn}\!+\vkq{f(x)}_{x\in I_\vkm}\big), \\[1mm]
\vkw{f(x)}_{x\in I} & \!\!\!\!\! = \frac{1}{2}\big(\vkq{f(x)}_{x\in I_\vkn}\!-\vkq{f(x)}_{x\in I_\vkm}\big), \\[1mm]
\vkq{f(x)}_{x\in I_\vkn} & \!\!\!\!\! =\vkq{f(x)}_{x\in I}+\vkw{f(x)}_{x\in I}, \\[1mm]
\vkq{f(x)}_{x\in I_\vkm} & \!\!\!\!\! =\vkq{f(x)}_{x\in I}-\vkw{f(x)}_{x\in I},
\end{array}
\end{equation*}
and $\frac{1}{2}(A\!+\!B)^3+\frac{1}{2}(A\!-\!B)^3 - A^3 = 3A B^2$.
Two more examples are provided in Table \ref{vktableexamples}.
\begin{table}[th]
{\small
\begin{center}\begin{tabular}{|p{0.28\textwidth}|p{0.58\textwidth}|}
\hline \rule{0mm}{4.5mm}
\hspace*{0.125\textwidth}$\mathcal{B}$ & \hspace*{0.268\textwidth}$\Box\mathcal{B}$ \\
\hline \rule{0mm}{5mm}
$\vkqqq{\vkq{F(x,y)}_{x}^3}_{y}$
& $3 \vkqqq{\vkq{F(x,y)}_{x} \vkw{F(x,y)}_{x}^2}_{y}$\\[1mm]
\hline \rule{0mm}{5mm}
$\,\vkq{F(x,y)}_{x,y}^2$
& $\,\vkq{\vkw{F(x,y)}_{x}}_{y}^2 + \vkw{\vkq{F(x,y)}_{x}}_{y}^2 + \vkw{F(x,y)}_{x,y}^2$\\[1mm]
\hline \rule{0mm}{5mm}
$\vkqqq{\vkq{F(x,y)G(x',y)}_{y}^2}_{x,x'}$
& $\vkqqq{\vkw{F(x,y)G(x',y)}_{y}^2}_{x,x'} + \vkwww{\vkq{F(x,y)G(x',y)}_{y}^2}_{x,x'}$ \\[1mm]
& $+\, \vkwww{\vkw{F(x,y)G(x',y)}_{y}^2}_{x,x'}$ \\[1mm]
\hline\end{tabular}\end{center}\rule{0mm}{1mm} }
\caption{Sample table of first order differences.}
\label{vktableexamples}
\end{table}

Let us now suppose that we have found a quantity $\mathcal{B}$ such that
$|\mathcal{A}| \leq \Box\mathcal{B}$, i.e.\@ more precisely
\begin{equation}
\label{vkeqlesstelescope}
|\mathcal{A}_Q (F_1,\ldots,F_l)| \,\leq\, \Box\mathcal{B}_Q (F_1,\ldots,F_l)
\end{equation}
for all squares $Q\in\mathcal{T}$ and any nonnegative bounded functions $F_1,\ldots,F_l$.
By fixing an $l$-tuple of functions, applying (\ref{vkeqlesstelescope}) to an arbitrary $Q\in\mathcal{T}$,
and multiplying by $|Q|$ we get
$$ |Q|\,|\mathcal{A}_Q (F_1,\ldots,F_l)| \,\leq \,
\sum_{\widetilde{Q}\textrm{ is \!a \!child \!of }Q}
\!\!|\widetilde{Q}|\,\mathcal{B}_{\widetilde{Q}} (F_1,\ldots,F_l)
\,-\, |Q|\,\mathcal{B}_{Q} (F_1,\ldots,F_l) . $$
Summing over $Q\in\mathcal{T}$ leads to
\begin{equation}
\label{vkeqmainteleestimate}
|\Lambda_{\mathcal{T}}(F_1,\ldots,F_l)| \,\leq\,
\sum_{Q\in\mathcal{L}(\mathcal{T})} \!|Q|\,\mathcal{B}_Q (F_1,\ldots,F_l)
\,-\, |Q_\mathcal{T}|\,\mathcal{B}_{Q_\mathcal{T}} (F_1,\ldots,F_l)
\end{equation}
for $\Lambda_\mathcal{T}$ given by (\ref{vkeqgeneraldef}).
To verify (\ref{vkeqmainteleestimate}), one only has to notice that each term
$$ |Q|\,\mathcal{B}_{Q} (F_1,\ldots,F_l) \qquad\textrm{for }\, Q\in\mathcal{T}\setminus\{Q_\mathcal{T}\} $$
appears exactly once with a positive sign and exactly once with a negative sign
and thus all terms but those appearing in (\ref{vkeqmainteleestimate}) cancel themselves.
Here is where we crucially use the tree structure --- a general collection of squares would not work.

The quantity $\mathcal{B}$ can be called the \emph{Bellman function} for $\Lambda_{\mathcal{T}}$.
It is certainly not unique and other properties for $\mathcal{B}$ required in the actual problem
will further narrow the choice.
Usefulness of (\ref{vkeqmainteleestimate}) is in the fact that it reduces controlling
a multi-scale quantity $\Lambda_{\mathcal{T}}$ to controlling two single-scale expressions:
one on the level of the ``finest scales'' $\mathcal{L}(\mathcal{T})$
and another one on the level of the ``roughest scale'' $Q_\mathcal{T}$.
Furthermore, if $\mathcal{B}$ is nonnegative, then the last term
$|Q_\mathcal{T}|\,\mathcal{B}_{Q_\mathcal{T}} (F_1,\ldots,F_l)$ can be discarded.

\smallskip
The computation of $\Box\mathcal{B}$ might be quite tedious, even for very simple terms $\mathcal{B}$.
Furthermore, it might not be clear how to find any $\mathcal{B}$
such that $\Box\mathcal{B}$ relates to a given term $\mathcal{A}$.
Fortunately, there is a special class of terms in which both of these tasks are rather straightforward.
A \emph{paraproduct-type term} is a quantity $\mathcal{A}=\mathcal{A}_{I\times J} (F_1,\ldots,F_l)$
that takes the form
\begin{equation}
\label{vkeqstandardgeneral}
\mathcal{A} = \Big(\ldots\Big(\Big(\ldots\Big(\Phi(x_1,\ldots,x_m,y_1,\ldots,y_n)
\Big)_{x_1\in I}\ldots\Big)_{x_m\in I}\Big)_{y_1\in J}\ldots\Big)_{y_n\in J} ,
\end{equation}
where each pair of parentheses $(\cdot)$ is replaced by a pair of brackets, either $\vkq{\cdot}$ or $\vkw{\cdot}$,
with the same subscripted variable.
Here, $\Phi(x_1,\ldots,x_m,y_1,\ldots,y_n)$ depends on the functions $F_1,\ldots,F_l$.
An \emph{averaging paraproduct-type term} is a paraproduct-type term
$\mathcal{B}=\mathcal{B}_{I\times J} (F_1,\ldots,F_l)$
containing only brackets of type $\vkq{\cdot}$, i.e.
\begin{equation}
\label{vkeqstandardterm}
\mathcal{B} = \vkqqqq{\Phi(x_1,\ldots,x_m,y_1,\ldots,y_n)}_{x_1,\ldots,x_m\in I,\ y_1,\ldots,y_n\in J} .
\end{equation}
Linear combinations of paraproduct-type terms are called \emph{paraproduct-type expressions}.
We also regard $\mathcal{B}\equiv 0$ to be a (trivial) averaging paraproduct-type term.
Nontrivial examples of paraproduct-type terms have already existed in the literature.
For instance, the \emph{Gowers box norm} used by Shkredov in \cite{S}
can be written as
$$ \|F\|_{{\setlength{\fboxsep}{0pt}\fbox{\rule{0mm}{1.5mm}\rule{2.8mm}{0mm}\rule{0mm}{1.5mm}}}(I\times J)}
= \vkqqq{F(x_1,y_1) F(x_1,y_2) F(x_2,y_1) F(x_2,y_2)}_{x_1,x_2\in I,\ y_1,y_2\in J}^{1/4} . $$

\begin{theorem}
\label{vktheoremterms}
The first order difference of the averaging paraproduct-type term \emph{(\ref{vkeqstandardterm})}
is a paraproduct-type expression given by the formula
$$ \Box\mathcal{B} = \!\sum_{\substack{S\subseteq\{1,\ldots,m\},\ T\subseteq\{1,\ldots,n\}\\
|S|,|T|\,\mathrm{even},\ (S,T)\neq(\emptyset,\emptyset)}}\!
\vkqqqq{\vkwwww{\Phi(x_1,\ldots,x_m,y_1,\ldots,y_n)
}_{\substack{x_i\in I\ \mathrm{ for }\ i\in S\\ y_j\in J\ \mathrm{ for }\ j\in T}\,}
}_{\substack{x_i\in I\ \mathrm{ for }\ i\in S^c\\ y_j\in J\ \mathrm{ for }\ j\in T^c}} . $$
\end{theorem}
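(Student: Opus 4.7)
The plan is to compute $\Box\mathcal{B}$ by directly expanding each of the four child-averages via a per-variable identity, then averaging over the four children and noticing cancellations.

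The essential single-variable identity is that, for any integrable $f$ on $I$,
$$ \vkq{f(x)}_{x\in I_\vkn} = \vkq{f(x)}_{x\in I} + \vkw{f(x)}_{x\in I}, \qquad
\vkq{f(x)}_{x\in I_\vkm} = \vkq{f(x)}_{x\in I} - \vkw{f(x)}_{x\in I}, $$
which is already recorded in the excerpt. Introducing signs $\epsilon,\eta\in\{\pm 1\}$ so that $I_\vkn = I_{+1}$, $I_\vkm = I_{-1}$ (and similarly for $J$), the four children of $I\times J$ are $I_\epsilon\times J_\eta$, and the identity above reads $\vkq{f}_{x\in I_\epsilon} = \vkq{f}_{x\in I} + \epsilon\vkw{f}_{x\in I}$ as operators acting on functions of $x$.

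Next, I would invoke Fubini to factor the $(m{+}n)$-fold average over $x_1,\ldots,x_m\in I_\epsilon$ and $y_1,\ldots,y_n\in J_\eta$ as a composition of single-variable operators, one per variable, and substitute the identity above in each slot. Multilinearly expanding the resulting product gives
$$ \vkqqqq{\Phi}_{\substack{x_i\in I_\epsilon\\ y_j\in J_\eta}}
\ =\ \sum_{\substack{S\subseteq\{1,\ldots,m\}\\ T\subseteq\{1,\ldots,n\}}} \epsilon^{|S|}\,\eta^{|T|}\,
\vkqqqq{\vkwwww{\Phi}_{\substack{x_i\in I,\ i\in S\\ y_j\in J,\ j\in T}}}_{\substack{x_i\in I,\ i\in S^c\\ y_j\in J,\ j\in T^c}}, $$
where $S$ (resp.\ $T$) records the variables for which the wavelet bracket $\vkw{\cdot}$ was chosen instead of the averaging bracket $\vkq{\cdot}$, and the commutation of the operators in different variables is justified by Fubini (since $\Phi$ is bounded and the measures are finite).

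Finally, the child-average term in $\Box\mathcal{B}$ is $\frac14\sum_{\epsilon,\eta\in\{\pm 1\}}\vkq{\Phi}_{x_i\in I_\epsilon,\,y_j\in J_\eta}$. Interchanging the inner sum over $(S,T)$ with the outer sum over $(\epsilon,\eta)$ reduces the coefficient of each $(S,T)$-term to $\frac14\sum_{\epsilon,\eta}\epsilon^{|S|}\eta^{|T|}$, which, by orthogonality of the $\pm 1$ characters, equals $1$ when both $|S|$ and $|T|$ are even and $0$ otherwise. Isolating the $(S,T)=(\emptyset,\emptyset)$ contribution, which equals $\mathcal{B}_{I\times J}$, and subtracting it off in the definition of $\Box\mathcal{B}$ yields exactly the stated formula. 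The proof is essentially an algebraic bookkeeping exercise; the only place where one has to be careful is to make sure that the per-variable operators really do commute across the different slots of $\Phi$, which is the content of Fubini.
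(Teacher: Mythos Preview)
Your proof is correct and follows essentially the same route as the paper: expand each child average via the per-variable identity, sum over the four children, and observe that only the terms with $|S|$ and $|T|$ both even survive. The paper packages the signs slightly differently, introducing the auxiliary function $\vartheta_I := \mathbf{1}_{I_\vkn}-\mathbf{1}_{I_\vkm}$ so that $\vkw{f}_{x\in I}=\vkq{f\vartheta_I}_{x\in I}$ and $\vkq{f}_{x\in I_{\pm}}=\vkq{f(1\pm\vartheta_I)}_{x\in I}$, and then expands the product $\prod_i(1+\alpha\vartheta_I(x_i))\prod_j(1+\beta\vartheta_J(y_j))$ and averages over $(\alpha,\beta)\in\{\pm 1\}^2$; this is just a pointwise-function repackaging of your operator-level expansion and leads to the identical orthogonality computation $\tfrac{1}{4}\sum_{\alpha,\beta}\alpha^{|S|}\beta^{|T|}$.
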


Before the proof, let us remark that we can extend the definition of
paraproduct-type terms to formal finite products consisting of
finitely many two-dimensional functions $F,G,H,F_1,F_2,\ldots$ in finitely many variables
$x,x',y,y',x_1,x_2\ldots$,
with finitely many inserted brackets of two types, $\vkq{\cdot}$ and $\vkw{\cdot}$, each with a variable in its subscript,
as long as they can be transformed into the standard form (\ref{vkeqstandardgeneral}) by renaming duplicate variables.
For example, (\ref{vkeqsimpleex}) is just a shorter form of
$$ \vkqqqq{\vkq{F(x,y)}_{x\in I} \,\vkq{F(x,y)}_{x\in I} \,\vkq{F(x,y)}_{x\in I} }_{y\in J} $$
and can be further rewritten as
\begin{equation}
\label{vkeqsimpleex2}
\vkqqq{ \underbrace{F(x_1,y) F(x_2,y) F(x_3,y)}_{\Phi(x_1,x_2,x_3,y)} }_{x_1,x_2,x_3\in I,\ y\in J} .
\end{equation}
All terms in Table \ref{vktableexamples} are paraproduct-type terms,
but only the terms in the left column are averaging.
The formula from Theorem \ref{vktheoremterms} claims that
$\Box\mathcal{B}$ is equal to the sum of all non-averaging paraproduct-type terms
obtained by replacing some pairs of brackets of type $\vkq{\cdot}$ with pairs of brackets of type $\vkw{\cdot}$
in any possible way such that:
\begin{itemize}
\item[(1)]
The number of replacements corresponding to variables in $I$ is even.
\item[(2)]
The number of replacements corresponding to variables in $J$ is even.
\item[(3)]
At least two replacements are made, i.e.\@ the derived terms are not averaging.
\end{itemize}
In particular, if $\mathcal{B}$ contains $m$ brackets corresponding to variables in $I$
and $n$ brackets corresponding to variables in $J$,
then $\Box\mathcal{B}$ will consist of $2^{m+n-2}-1$ (possibly repeating) terms.
For instance, in (\ref{vkeqsimpleex2}) there are $3$ brackets corresponding to variables in $I$
and $1$ bracket corresponding to a variable in $J$,
giving us only $2^{3+1-2}-1=3$ possibilities.
All $3$ of these terms are equal and we arrive at (\ref{vkeqsimpleex3}).

\begin{proof}[Proof of Theorem \ref{vktheoremterms}]
The claim can be rewritten (using the definition of $\Box\mathcal{B}$) as
\begin{align}
& \hspace{5.25mm}\frac{1}{4}\vkqqq{\Phi(x_1,\ldots,x_m,y_1,\ldots,y_n)}_{x_1,\ldots,x_m\in I_\vkn,\ y_1,\ldots,y_n\in J_\vkn} \nonumber\\
& +\frac{1}{4}\vkqqq{\Phi(x_1,\ldots,x_m,y_1,\ldots,y_n)}_{x_1,\ldots,x_m\in I_\vkn,\ y_1,\ldots,y_n\in J_\vkm} \nonumber\\
& +\frac{1}{4}\vkqqq{\Phi(x_1,\ldots,x_m,y_1,\ldots,y_n)}_{x_1,\ldots,x_m\in I_\vkm,\ y_1,\ldots,y_n\in J_\vkn} \label{vkeqmainthm}\\
& +\frac{1}{4}\vkqqq{\Phi(x_1,\ldots,x_m,y_1,\ldots,y_n)}_{x_1,\ldots,x_m\in I_\vkm,\ y_1,\ldots,y_n\in J_\vkm} \nonumber\\
& = \!\sum_{\substack{S\subseteq\{1,\ldots,m\},\ T\subseteq\{1,\ldots,n\}\\ |S|,|T|\,\mathrm{even}}}\!
\vkqqqq{\vkwwww{\Phi(x_1,\ldots,x_m,y_1,\ldots,y_n)
}_{\substack{x_i\in I \textrm{ for } i\in S\\ y_j\in J \textrm{ for } j\in T}\,}
}_{\substack{x_i\in I \textrm{ for } i\in S^c\\ y_j\in J \textrm{ for } j\in T^c}} . \nonumber
\end{align}
For the purpose of the proof, we denote
\,$\vartheta_I := \mathbf{1}_{I_\vkn}-\mathbf{1}_{I_\vkm}$
and immediately observe that
\begin{equation}
\label{vkeqauxave}
\left.{\setlength{\arraycolsep}{2pt}\begin{array}{rl}
\vkw{f(x)}_{x\in I} & = \vkqq{f(x)\vartheta_I (x)}_{x\in I}, \\[1mm]
\vkq{f(x)}_{x\in I_\vkn} & = \vkqqq{f(x)\big(1+\vartheta_I (x)\big)}_{x\in I}, \\[1mm]
\vkq{f(x)}_{x\in I_\vkm} & = \vkqqq{f(x)\big(1-\vartheta_I (x)\big)}_{x\in I}.
\end{array}}\right\}
\end{equation}
We start from an obvious algebraic identity
$$ \Big(\prod_{i=1}^{m} \big(1+\alpha\,\vartheta_I (x_i)\big)\Big)
\Big(\prod_{j=1}^{n} \big(1+\beta\,\vartheta_J (y_j)\big)\Big)
=\!\sum_{\substack{S\subseteq\{1,\ldots,m\}\\ T\subseteq\{1,\ldots,n\}}}\!\!
\alpha^{|S|}\beta^{|T|} \Big(\prod_{i\in S} \vartheta_I (x_i)\Big) \Big(\prod_{j\in T} \vartheta_J (y_j)\Big), $$
proved simply by multiplying out the product on the left hand side.
We choose four particular values for the parameters $\alpha$ and $\beta$,
$$ (\alpha,\beta) \in \big\{(1,1),(1,-1),(-1,1),(-1,-1)\big\}, $$
add up these four equalities, and divide by $4$.
Finally, we multiply by $\Phi(x_1,\ldots,x_m,$ $y_1,\ldots,y_n)$, average  over $x_1,\ldots,x_m\in I$, $y_1,\ldots,y_n\in J$,
and use (\ref{vkeqauxave}) to get the desired Identity (\ref{vkeqmainthm}).
\end{proof}
Theorem \ref{vktheoremterms} generalizes the ``telescoping identity'' from \cite{K1}.
In order to control a given paraproduct-type term $\mathcal{A}$, we first try to dominate it
by ``simpler'' terms, where a certain notion of ``complexity'' is defined depending on the numbers of
different functions/variables involved, or in a more refined way as in Section \ref{vksec3singletree}.
If that is not possible, then we take $\mathcal{B}$ such that $\mathcal{A}$ appears as one of
the summands in $\Box\mathcal{B}$ and try to dominate the remaining terms in $\Box\mathcal{B}$.

Let us state a part of the presented proof as a separate lemma.
\begin{lemma}
\label{vklemmaterms}
The following identity holds:
\begin{align}
& \sum_{\substack{S\subseteq\{1,\ldots,m\}\\ |S|\,\mathrm{even}}}\!
\vkqqqq{\vkwwww{\Psi(x_1,x_2,\ldots,x_m)
}_{x_i\in I\ \mathrm{for}\ i\in S}\,}_{x_i\in I\ \mathrm{for}\ i\in S^c}
\label{vkeqnonnegativesum} \\
& = \frac{1}{2}\vkqqq{\Psi(x_1,x_2,\ldots,x_m)}_{x_1,x_2,\ldots,x_m\in I_\vkn}
+ \frac{1}{2}\vkqqq{\Psi(x_1,x_2,\ldots,x_m)}_{x_1,x_2,\ldots,x_m\in I_\vkm} . \nonumber
\end{align}
In particular, if \,$\Psi(x_1,x_2,\ldots,x_m)\geq 0$,
then the sum \emph{(\ref{vkeqnonnegativesum})} will also be nonnegative.
\end{lemma}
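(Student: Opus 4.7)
The plan is to reuse the argument from the proof of Theorem \ref{vktheoremterms}, but now in a ``one-sided'' version with only the $x$-variables present and only two algebraic substitutions. Concretely, I would start from the polynomial identity
\[
\prod_{i=1}^{m}\big(1+\alpha\,\vartheta_I(x_i)\big) \,=\, \sum_{S\subseteq\{1,\ldots,m\}}\alpha^{|S|}\prod_{i\in S}\vartheta_I(x_i),
\]
which is just the distributive law; no analogue of the $\beta$ variable is needed because the $y$-coordinate plays no role here.

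Next I would average the two specializations $\alpha=1$ and $\alpha=-1$. Since $\alpha^{|S|}$ equals $+1$ for both values when $|S|$ is even and has opposite signs when $|S|$ is odd, all odd-cardinality terms cancel and the even ones survive with coefficient $1$, giving
\[
\tfrac{1}{2}\prod_{i=1}^{m}\big(1+\vartheta_I(x_i)\big) \,+\, \tfrac{1}{2}\prod_{i=1}^{m}\big(1-\vartheta_I(x_i)\big) \,=\, \sum_{\substack{S\subseteq\{1,\ldots,m\}\\ |S|\,\mathrm{even}}}\prod_{i\in S}\vartheta_I(x_i).
\]

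Now multiply this pointwise identity by $\Psi(x_1,\ldots,x_m)$ and average each variable $x_i$ over $I$. On the left, the first two identities of \eqref{vkeqauxave} turn each factor $1\pm\vartheta_I(x_i)$ into the operation of averaging $x_i$ over $I_\vkn$ or $I_\vkm$ respectively, producing exactly $\tfrac12\vkqqq{\Psi}_{x_1,\ldots,x_m\in I_\vkn}+\tfrac12\vkqqq{\Psi}_{x_1,\ldots,x_m\in I_\vkm}$. On the right, the first identity of \eqref{vkeqauxave} converts each $\vkq{\vartheta_I(x_i)\,\cdot\,}_{x_i\in I}$ into a bracket $\vkw{\,\cdot\,}_{x_i\in I}$, giving precisely the sum \eqref{vkeqnonnegativesum}. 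This is the identity claimed.

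The ``in particular'' clause is then immediate: if $\Psi\ge 0$, both terms on the right are averages of nonnegative quantities and therefore nonnegative, so the same holds for the left-hand sum. There is no real obstacle here beyond keeping track of which brackets arise from which factors; the proof is essentially a one-variable-block specialization of the calculation already carried out for Theorem \ref{vktheoremterms}.
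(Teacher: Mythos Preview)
Your proof is correct and follows essentially the same route as the paper's own argument: both use the one-variable product identity $\prod_i(1+\alpha\,\vartheta_I(x_i))=\sum_S\alpha^{|S|}\prod_{i\in S}\vartheta_I(x_i)$, specialize to $\alpha=\pm 1$, average, multiply by $\Psi$, and invoke the relations in \eqref{vkeqauxave}. One small slip: when you say ``the first two identities of \eqref{vkeqauxave}'' convert $1\pm\vartheta_I$ into half-interval averages, you mean the second and third identities there; the first is the one you correctly cite afterwards for producing the $\vkw{\cdot}$ brackets.
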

We remark that (\ref{vkeqnonnegativesum}) is constructed by adding up all terms of the form
$$ \Big(\ldots\Big(\Big(\Psi(x_1,x_2,\ldots,x_m)\Big)_{x_1\in I}\Big)_{x_2\in I}\ldots\Big)_{x_m\in I} $$
where an even number of pairs of parentheses $(\cdot)$ is replaced with pairs of brackets $\langle\cdot\rangle$
and the remaining ones are replaced with pairs of brackets $[\cdot]$.
%(Possible repeating terms are added multiple times.)
\begin{proof}[Proof of Lemma \ref{vklemmaterms}]
This time we use a simpler identity,
$$ \prod_{i=1}^{m} \big(1+\alpha\,\vartheta_I (x_i)\big)
=\!\sum_{S\subseteq\{1,\ldots,m\}}\!\!\alpha^{|S|}\,\prod_{i\in S} \vartheta_I (x_i), $$
with $\alpha=\pm 1$.\,
The sum in question is
\begin{align*}
& \sum_{\substack{S\subseteq\{1,\ldots,m\}\\ |S|\,\mathrm{even}}}\!
\vkqqqq{\Psi(x_1,x_2,\ldots,x_m)\prod_{i\in S} \vartheta_I (x_i)}_{x_1,x_2,\ldots,x_m\in I} \\
& = \frac{1}{2}\,\vkqqqq{\Psi(x_1,x_2,\ldots,x_m)\prod_{i=1}^{m}\! \big(1\!+\!\vartheta_I (x_i)\big)}_{x_1,x_2,\ldots,x_m\in I} \\
& \ \ \ + \frac{1}{2}\,\vkqqqq{\Psi(x_1,x_2,\ldots,x_m)\prod_{i=1}^{m}\! \big(1\!-\!\vartheta_I (x_i)\big)}_{x_1,x_2,\ldots,x_m\in I} \\
& = \frac{1}{2}\vkqqq{\Psi(x_1,x_2,\ldots,x_m)}_{x_1,x_2,\ldots,x_m\in I_\vkn}
+ \frac{1}{2}\vkqqq{\Psi(x_1,x_2,\ldots,x_m)}_{x_1,x_2,\ldots,x_m\in I_\vkm} . \tag*{\qedhere}
\end{align*}
\end{proof}

Theorem \ref{vktheoremterms} and Lemma \ref{vklemmaterms} will be applied successively
in Sections \ref{vksec3singletree} and \ref{vksec5example}.

\section{A single tree estimate}
\label{vksec3singletree}

The form $\Lambda=\Lambda_{E,S,T}$ from Section \ref{vksec1paraproductforms}
can be rewritten using the notation from the previous section as
$$ \Lambda\big((F_{i,j})_{(i,j)\in E}\big)\,
= \sum_{Q\in\mathcal{C}} \,|Q| \,\mathcal{A}_{Q}\big((F_{i,j})_{(i,j)\in E}\big) , $$
with
$$ \mathcal{A}_{I\times J}\big((F_{i,j})_{(i,j)\in E}\big) =
\vkqqqq{\vkwwww{\prod_{(i,j)\in E}\!\! F_{i,j}(x_i,y_j)
}_{\substack{x_i\in I \textrm{ for } i\in S\\ y_j\in J \textrm{ for } j\in T}\,}
}_{\substack{x_i\in I \textrm{ for } i\in S^c\\ y_j\in J \textrm{ for } j\in T^c}} . $$
This formulation also justifies the factor $|I|^{2-(m+n)/2}$ in the definition.
We may suppose that all the functions $F_{i,j}$ are nonnegative,
since otherwise they can be split into positive and negative (real and imaginary) parts
and one uses multilinearity of the form.

We need to introduce a local version of $\Lambda$, so
for a finite convex tree of dyadic squares $\mathcal{T}$ we define
$$ \Lambda_{\mathcal{T}}\big((F_{i,j})_{(i,j)\in E}\big)
:= \sum_{Q\in\mathcal{T}} |Q| \,\big|\mathcal{A}_{Q}\big((F_{i,j})_{(i,j)\in E}\big)\big| . $$
Notice an absolute value in the definition, which makes $\Lambda_{\mathcal{T}}$ only multi-sublinear.

The main step towards the proof of Theorem \ref{vktheoremmainbound}
is an estimate for a single tree.

\begin{proposition}[Single tree estimate]
\label{vkpropositiontree}
For any finite convex tree $\mathcal{T}$ with root $Q_\mathcal{T}$ and leaves $\mathcal{L}(\mathcal{T})$ we have
\begin{equation}
\label{vkeqsingletree}
\Lambda_{\mathcal{T}}\big((F_{i,j})_{(i,j)\in E}\big)
\ \lesssim_{m,n} |Q_\mathcal{T}|\prod_{(i,j)\in E} \max_{Q\in\mathcal{T}\cup\mathcal{L}(\mathcal{T})}
\vkqqq{F_{i,j}^{d_{i,j}}}_{Q}^{1/d_{i,j}} .
\end{equation}
\end{proposition}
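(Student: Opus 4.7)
The plan is to cast the proposition as an instance of the Bellman/telescoping scheme from Section \ref{vksec2bellmanfunctions}. Concretely, I would construct a finite family of nonnegative averaging paraproduct-type terms $\mathcal{B}^{(\kappa)}_Q$ satisfying two properties: (i) a pointwise \emph{Bellman inequality} of the shape $|\mathcal{A}_Q|\le C_{m,n}\sum_\kappa\Box\mathcal{B}^{(\kappa)}_Q$ for every $Q\in\mathcal{T}$, and (ii) a \emph{H\"older bound} $\mathcal{B}^{(\kappa)}_Q\le\prod_{(i,j)\in E}\vkq{F_{i,j}^{d_{i,j}}}_Q^{1/d_{i,j}}$. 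Once (i) and (ii) are in hand, multiplying (i) by $|Q|$ and summing over $\mathcal{T}$ triggers the telescoping identity (\ref{vkeqmainteleestimate}), which, after discarding the nonnegative root term, reduces the left-hand side to a weighted sum over leaves. Inserting (ii) on each leaf and using that leaves partition $Q_\mathcal{T}$ with total measure $|Q_\mathcal{T}|$ then produces (\ref{vkeqsingletree}).

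Before constructing $\mathcal{B}^{(\kappa)}$, I would reduce to a single connected component of the bipartite graph $E$, since the variables split according to components, the integrand of $\mathcal{A}_Q$ factorizes, and the invariants $d_{i,j}$ are constant on each component. Within a connected component with bipartition classes of sizes $a\le b$ one has $d_{i,j}=b$ uniformly, and the natural candidate Bellman term is an averaging paraproduct-type expression
$$\mathcal{B}_{I\times J}=\vkqqqq{\prod_{(i,j)\in E}F_{i,j}(x_i,y_j)^{c_{i,j}}}_{x_1,\ldots,x_m\in I,\,y_1,\ldots,y_n\in J}$$
with nonnegative integer multiplicities $c_{i,j}$ chosen so that (ii) follows from a fiber-wise Cauchy--Schwarz/H\"older pairing along the graph. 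The remaining freedom in the $c_{i,j}$ is then spent on matching the expansion of $\Box\mathcal{B}$ supplied by Theorem \ref{vktheoremterms} against $|\mathcal{A}|$.

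The main technical step is (i). The strategy is to repeatedly apply Cauchy--Schwarz to $\mathcal{A}_Q$, duplicating variables one at a time and starting from the smaller bipartition class, until the signed entangled term $\mathcal{A}_Q$ is dominated by a single nonnegative ``Gowers-box'' paraproduct-type term; this dominating term is then identified, via Theorem \ref{vktheoremterms}, as one of the summands in $\Box\mathcal{B}_Q$, and all remaining summands of $\Box\mathcal{B}_Q$ are shown to be nonnegative by Lemma \ref{vklemmaterms} (using that the $F_{i,j}$ are nonnegative). The Cauchy--Schwarz duplications account precisely for the exponent $c_{i,j}$ needed so that $\mathcal{B}_Q\lesssim\prod_{(i,j)}\vkq{F_{i,j}^{b}}_Q^{1/b}$, and iterating across the smaller class yields the uniform multiplicity $d_{i,j}=b$. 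I expect this combinatorial bookkeeping---selecting the right ``stripping order'' of vertices in the smaller class and tracking how multiplicities propagate along the bipartite graph---to be the main obstacle, and this is where the integer-partition combinatorics announced in the abstract should enter.

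With (i) and (ii) in place, (\ref{vkeqmainteleestimate}) and the H\"older bound combine to give
$$\Lambda_\mathcal{T}\big((F_{i,j})_{(i,j)\in E}\big)\le C_{m,n}\sum_{Q\in\mathcal{L}(\mathcal{T})}|Q|\sum_\kappa\mathcal{B}^{(\kappa)}_Q \ \lesssim_{m,n}\ \sum_{Q\in\mathcal{L}(\mathcal{T})}|Q|\prod_{(i,j)\in E}\vkq{F_{i,j}^{d_{i,j}}}_Q^{1/d_{i,j}}.$$
Majorizing each leaf-average $\vkq{F_{i,j}^{d_{i,j}}}_Q^{1/d_{i,j}}$ by $\max_{Q'\in\mathcal{T}\cup\mathcal{L}(\mathcal{T})}\vkq{F_{i,j}^{d_{i,j}}}_{Q'}^{1/d_{i,j}}$ and using $\sum_{Q\in\mathcal{L}(\mathcal{T})}|Q|=|Q_\mathcal{T}|$ then yields (\ref{vkeqsingletree}).
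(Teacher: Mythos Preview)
Your overall architecture is right: normalize, find an averaging Bellman expression $\mathcal{B}$ with $|\mathcal{A}|\le\Box\mathcal{B}$ and with $\mathcal{B}$ controlled by the H\"older-type quantity, then telescope via (\ref{vkeqmainteleestimate}). The gap is in your step~(i), specifically the sentence ``all remaining summands of $\Box\mathcal{B}_Q$ are shown to be nonnegative by Lemma~\ref{vklemmaterms}.'' This is false in general, and it is exactly the obstacle that forces the paper's induction on partition types.

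When you expand $\Box\mathcal{B}$ via Theorem~\ref{vktheoremterms}, the summands are indexed by pairs $(S,T)$ with $|S|,|T|$ even. Lemma~\ref{vklemmaterms} lets you control a sum over \emph{all} even subsets of a \emph{single} variable group when the integrand is nonnegative; it says nothing about a single fixed $(S,T)$, nor about sums where the integrand already carries signs. Concretely, look at the worked example in Section~\ref{vksec5example}: with $\mathcal{B}^{(4)}$ there, $\Box\mathcal{B}^{(4)}$ contains terms of the shape
\[
\mathcal{A}^{(7)}=\Big(\big\langle\Phi_1\big\rangle_{x_1}\big\langle\Phi_2\big\rangle_{x_2}\Big)_{y_1,y_1',y_3},
\]
a product of two signed Haar-type averages. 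This product has no sign, and summing over even $y$-subsets via Lemma~\ref{vklemmaterms} only yields an average of that same signed product over the children of $J$, which is still signed. So a single $\mathcal{B}$ cannot absorb these terms.

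What the paper actually does (Lemmas~\ref{vklemmareduction}--\ref{vklemmafinal}) is treat such ``bad'' terms by $|AB|\le\frac{1}{2\delta}A^2+\frac{\delta}{2}B^2$, which trades $\mathcal{A}^{(7)}$ for two new paraproduct-type terms: one of strictly smaller \emph{partition rank} (with weight $\delta^{-1}$) and one possibly of the same or larger rank (with weight $\delta$). An induction on the rank, with $\delta$ chosen small at each step, then closes. The Bellman function that results is not a single averaging term but a carefully weighted \emph{linear combination} of many $\mathcal{B}^{(\mathbf{p})}$'s, one per composition type encountered along the induction. Your outline is missing precisely this $\delta$-weighted reduction and the induction on integer partitions that organizes it; without it, the ``remaining summands'' of $\Box\mathcal{B}$ cannot be discarded. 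A secondary point: the reduction to a single connected component is not as automatic as you suggest, since the cancellation condition $|S|\ge 2$ or $|T|\ge 2$ may be spread across components; the paper handles this separately in Subsection~\ref{vksubsectionfull}.
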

Note that the implicit constant is independent of the tree $\mathcal{T}$ and the functions $F_{i,j}$.
We can allow it to depend on the graph, because for each pair $(m,n)$ there are only finitely many choices for $(E,S,T)$.
Moreover, $d_{i,j}$ are determined by the graph.

By homogeneity of Estimate (\ref{vkeqsingletree}) and its invariance under dyadic dilations we can normalize
the tree and the functions by $|Q_\mathcal{T}|=1$ and
\begin{equation}
\label{vkeqfnormalize}
\max_{Q\in\mathcal{T}\cup\mathcal{L}(\mathcal{T})}\!\vkqqq{F_{i,j}^{d_{i,j}}}_{Q}^{1/d_{i,j}}=1
\qquad\textrm{for every $(i,j)\in E$}.
\end{equation}
Thus, our task is to prove
\begin{equation}
\label{vkeqsingletreenormalized}
\Lambda_{\mathcal{T}}\big((F_{i,j})_{(i,j)\in E}\big) \,\lesssim_{m,n} 1  .
\end{equation}

\subsection{Proof of Proposition \ref{vkpropositiontree} for complete bipartite graphs}
\label{vksubsectioncomplete}

Most of our effort will be spent in this special case, when
$E=\{1,\ldots,m\}\times\{1,\ldots,n\}$.
In particular, there is only one connected component and
$d_{i,j}=\max\{m,n\}$.
Later we will reduce the general case to this one.

We begin with a simple estimate for functions on a single square.
\begin{lemma}
\label{vklemmahoelder}
For nonnegative functions $G_{i,j}$ on a dyadic square $I\times J$ the following inequality holds,
$$ \bigg[\prod_{\substack{1\leq i\leq m \\ 1\leq j\leq n}}
G_{i,j}(x_i,y_j)\bigg]_{\substack{x_1,\ldots,x_m\in I\\ y_1,\ldots,y_n\in J}}
\leq \ \prod_{\substack{1\leq i\leq m \\ 1\leq j\leq n}}
\vkqqqq{G_{i,j}^{\max\{m,n\}}}_{I\times J}^{1/\max\{m,n\}}. $$
\end{lemma}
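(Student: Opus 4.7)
The plan is to apply Hölder's inequality in two rounds, first over $\vec y$ and then over $\vec x$, arranging the exponents so that the answer features the single common power $N=\max\{m,n\}$ rather than two different powers. By the symmetry $(m,I,\vec x)\leftrightarrow(n,J,\vec y)$ I may assume without loss of generality that $m\geq n$, so that $N=m$.

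First, for each fixed $\vec x=(x_1,\dots,x_m)$ the variables $y_1,\dots,y_n$ appear in disjoint groups of factors (only $y_j$ sits inside $G_{1,j},\dots,G_{m,j}$), and Fubini factors the $\vec y$-average as
$$ \vkqqq{\prod_{i,j}G_{i,j}(x_i,y_j)}_{y_1,\dots,y_n\in J}
= \prod_{j=1}^{n}\vkqqq{\prod_{i=1}^{m}G_{i,j}(x_i,y)}_{y\in J}. $$
For each $j$ I then apply the ordinary Hölder inequality with $m$ factors of common exponent $m$ to the inner one-variable average, producing
$$ \vkqqq{\prod_i G_{i,j}(x_i,y)}_{y\in J}\ \leq\ \prod_{i=1}^{m}L_{i,j}(x_i),\qquad L_{i,j}(x):=\vkqqq{G_{i,j}(x,y)^m}_{y\in J}^{1/m}. $$

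Next I average the product $\prod_{i,j}L_{i,j}(x_i)$ over $\vec x\in I^m$. Since each $L_{i,j}(x_i)$ depends on only one of the variables, Fubini factors the result as $\prod_i\vkq{\prod_j L_{i,j}(x)}_{x\in I}$. For each $i$ there are now only $n\leq m$ factors in the inner product, but because $\vkq{\cdot}_{x\in I}$ is a probability average, the ordinary Hölder inequality with $n$ factors of exponent $n$ combined with the monotonicity $(\vkq{f^n}_I)^{1/n}\leq(\vkq{f^m}_I)^{1/m}$ delivers
$$ \vkqqq{\prod_{j=1}^{n}L_{i,j}(x)}_{x\in I}\ \leq\ \prod_{j=1}^{n}\vkqqq{L_{i,j}(x)^m}_{x\in I}^{1/m}. $$
Unfolding $\vkq{L_{i,j}^m}_{x\in I}=\vkqqq{G_{i,j}^m}_{I\times J}$ and multiplying over all pairs $(i,j)$ recovers the claimed inequality.

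The argument is entirely elementary, and there is no substantial obstacle; the one point to watch is the asymmetric role played by $m$ versus $n$. Precisely the convention $N=m\geq n$ makes the Hölder step over $y$ (with all $m$ factors) tight at exponent $m$, while the matching step over $x$ (with only $n\leq m$ factors) is nevertheless able to retain the same exponent $m$ thanks to probability-space $L^p$-monotonicity.
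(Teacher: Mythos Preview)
Your proof is correct and follows essentially the same strategy as the paper: two rounds of H\"older's inequality together with the power-mean/Jensen step to align both exponents at $\max\{m,n\}$. The only cosmetic difference is the order of integration---you apply H\"older in $y$ first (exponent $m$) and then in $x$ (exponent $n$, upgraded by monotonicity), whereas the paper applies H\"older in $x$ first (exponent $n$) and then in $y$ (exponent $m$), upgrading afterwards via Jensen; the two variants are interchangeable.
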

\begin{proof}[Proof of Lemma \ref{vklemmahoelder}.]
Because of the obvious symmetry, we can assume $m\geq n$.
With two applications of (generalized) H\"{o}lder's inequality for $n$ and $m$ functions respectively,
we estimate the left hand side as
\begin{align*}
& \vkqqqq{\prod_{i=1}^{m}\vkqqqq{\prod_{j=1}^{n}G_{i,j}(x_i,y_j)}_{x_i}}_{y_1,\ldots,y_n}
\leq \ \vkqqqq{\prod_{i=1}^{m}\prod_{j=1}^{n}\vkqqq{G_{i,j}(x_i,y_j)^n}_{x_i}^{1/n}}_{y_1,\ldots,y_n}\\
& = \prod_{j=1}^{n}\vkqqqq{\prod_{i=1}^{m}\vkqqq{G_{i,j}(x,y)^n}_{x}^{1/n}}_{y}
\leq \ \prod_{j=1}^{n}\prod_{i=1}^{m}\vkqqq{\vkqqq{G_{i,j}(x,y)^n}_{x}^{m/n}}_{y}^{1/m}.
\end{align*}
By Jensen's inequality for the power function with exponent $\frac{m}{n}\geq 1$ we have
$$ \vkqqq{G_{i,j}(x,y)^n}_{x}^{m/n} \leq \vkqqq{G_{i,j}(x,y)^m}_{x} , $$
so
$$ \vkqqq{\vkqqq{G_{i,j}(x,y)^n}_{x}^{m/n}}_{y}^{1/m} \leq \vkqqq{G_{i,j}(x,y)^m}_{x,y}^{1/m} , $$
which completes the proof.
\end{proof}

In the following discussion, all constructions and (implicit) constants are understood to depend on $m$ and $n$,
but not on the tree $\mathcal{T}$ or the functions $F_{i,j}$.
We introduce the notion of a \emph{selective $(m,n)$-partition} as a $(2m+2n)$-tuple of integers
\begin{equation}
\label{vkeqptuple}
\mathbf{p} = (a_1,\ldots,a_m;\, b_1,\ldots,b_n;\, \alpha_1,\ldots,\alpha_m;\, \beta_1,\ldots,\beta_n)
\end{equation}
satisfying:
\begin{itemize}
\item[(1)]
$0\leq\alpha_i\leq a_i$\, for \,$i=1,\ldots,m$ \ and \ $0\leq\beta_j\leq b_j$\, for \,$j=1,\ldots,n$,
\item[(2)]
$a_1+\ldots+a_m=m$\, and \,$b_1+\ldots+b_n=n$,
\item[(3)]
$\alpha_1+\ldots+\alpha_m$\, and \,$\beta_1+\ldots+\beta_n$\, are even,
\item[(4)]
$\alpha_1+\ldots+\alpha_m\neq 0$\, or \,$\beta_1+\ldots+\beta_n\neq 0$.
\end{itemize}
The set of all selective $(m,n)$-partitions will be denoted $\Omega_{m,n}$.
To every $\mathbf{p}\in\Omega_{m,n}$ we associate a paraproduct-type term
$\mathcal{A}^{(\mathbf{p})}=\mathcal{A}_{I\times J}^{(\mathbf{p})}\big((F_{i,j})_{(i,j)\in E}\big)$ by
$$ \mathcal{A}_{I\times J}^{(\mathbf{p})} :=
\bigg[\bigg\langle\prod_{\substack{1\leq i\leq m\\ 1\leq j\leq n}}
\prod_{\substack{1\leq \mu\leq a_i\\ 1\leq \nu\leq b_j}}\! F_{i,j}(x_i^{(\mu)}\!,y_j^{(\nu)})
\bigg\rangle_{\substack{x_i^{(\mu)}\in I \textrm{ for all }(i,\mu)\\ \textrm{such that } 1\leq \mu\leq\alpha_i\\
y_j^{(\nu)}\in J \textrm{ for all }(j,\nu)\\ \textrm{such that } 1\leq \nu\leq\beta_j}\ }
\bigg]_{\substack{x_i^{(\mu)}\in I \textrm{ for all }(i,\mu)\\ \textrm{such that } \alpha_i+1\leq \mu\leq a_i\\
y_j^{(\nu)}\in J \textrm{ for all }(j,\nu)\\ \textrm{such that } \beta_j+1\leq \nu\leq b_j}} . $$
Pairs $(i,j)$ with $a_i=0$ or $b_j=0$ do not exist in the above product
as we interpret (sub)products over empty ranges to be identically $1$.
In words, we average the product of $mn$ terms of the pattern $F_{i,j}(x_i,y_j)$ that contains precisely
$a_i$ copies of $x_i$ and $b_j$ copies of $y_j$.
Averages of type $\vkw{\cdot}_{x_i}$ are taken over $\alpha_i$ of the $x_i$'s (i.e.\@ these $x_i$'s are ``selected''),
while averages of type $\vkq{\cdot}_{x_i}$ are taken over $a_i\!-\!\alpha_i$ remaining ones.
Similarly for $y_j$'s.
For instance, to
$$ \mathbf{p} = (2,0;\ 2,0,1;\ 0,0;\ 1,0,1) \in \Omega_{2,3} $$
we associate
$$ \mathcal{A}^{(\mathbf{p})}
\!=\! \big[\big\langle F_{1,1}(x_1,y_1) F_{1,1}(x_1,y'_1) F_{1,3}(x_1,y_3)
F_{1,1}(x'_1,y_1) F_{1,1}(x'_1,y'_1) F_{1,3}(x'_1,y_3)
\big\rangle_{\!y_1,y_3}\big]_{x_1,x'_1,y'_1}\!. $$

For $\mathbf{p}\in\Omega_{m,n}$ given by (\ref{vkeqptuple}) we define
the \emph{composition type} of $\mathbf{p}$ to be the vector of first $m+n$ components,
$$ \vkcomp(\mathbf{p}) := (a_1,\ldots,a_m;\, b_1,\ldots,b_n) , $$
and the \emph{partition type} of $\mathbf{p}$ (and $\mathcal{A}^{(\mathbf{p})}$) to be an $(m+n)$-tuple
$$ \vkparti(\mathbf{p}) := (a^{\ast}_1,\ldots,a^{\ast}_m;\, b^{\ast}_1,\ldots,b^{\ast}_n) , $$
where $a^{\ast}_1,\ldots,a^{\ast}_m$ is the decreasing rearrangement\footnote{This means:
$a^{\ast}_1\geq\ldots\geq a^{\ast}_m$ and $a^{\ast}_1,\ldots,a^{\ast}_m$
is a permutation of the multiset $a_1,\ldots,a_m$.}
of $a_1,\ldots,a_m$ and $b^{\ast}_1,\ldots,b^{\ast}_n$ is the decreasing rearrangement of $b_1,\ldots,b_n$.
The set of all these partition types will be denoted $\Omega^{\ast}_{m,n}$.
Note that $\Omega^{\ast}_{m,n}$ has $\mathfrak{p}_{\#}(m)\mathfrak{p}_{\#}(n)$ elements,
where $\mathfrak{p}_{\#}(n)$ denotes the number of distinct order-independent positive integer partitions of $n$,
i.e.\@ the number of Young diagrams with $n$ boxes.
Actually, we only use that the cardinalities $|\Omega_{m,n}|$ and $|\Omega^{\ast}_{m,n}|$
are finite numbers depending solely on $m,n$.

Finally, we define a strict total order relation $\prec$ on $\Omega^{\ast}_{m,n}$
simply as the restriction of the inverse of the lexicographical order
on $(m+n)$-tuples of integers.\footnote{Lexicographical order
on partitions of a single positive integer extends the common \emph{dominance order}, which is only a partial order.
Even though the latter one is already strong enough for intended purpose,
we prefer to have linear order to avoid invoking well-founded induction
in the proof. For the same reason we decide to order pairs of partitions totally (for two numbers $m$ and $n$),
although we will only need to compare partitions of a single number.}
Since every finite totally ordered set is isomorphic to an initial segment of positive integers,
we have a natural rank (i.e.\@ order) function,
$\vkord\colon\Omega^{\ast}_{m,n}\to\{1,2,\ldots,\mathfrak{p}_{\#}(m)\mathfrak{p}_{\#}(n)\}$.
Let us simply write \,$\vkord(\mathbf{p})$\, for \,$\vkord\big(\vkparti(\mathbf{p})\big)$.\,
For example, the total order on $\Omega^{\ast}_{2,3}$ and its rank function are
$$ {\setlength\arraycolsep{0pt}\begin{array}{ccccccccccc}
(2,0; 3,0,0) & \prec & (2,0; 2,1,0) & \prec & (2,0; 1,1,1) & \prec &
(1,1; 3,0,0) & \prec & (1,1; 2,1,0) & \prec & (1,1; 1,1,1).\\
\vkord=1 & & \vkord=2 & & \vkord=3 & & \vkord=4 & & \vkord=5 & & \vkord=6
\end{array}} $$

Our goal is to dominate all terms $\mathcal{A}^{(\mathbf{p})}$ by $\Box\mathcal{B}$
for some averaging paraproduct-type expression $\mathcal{B}=\mathcal{B}_{Q}\big((F_{i,j})_{(i,j)\in E}\big)$
that is controlled in the sense
\begin{equation}
\label{vkeqbcontrol}
\max_{Q\in\mathcal{T}\cup\mathcal{L}(\mathcal{T})}\big|\mathcal{B}_{Q}\big((F_{i,j})_{(i,j)\in E}\big)\big|\lesssim_{m,n} 1 .
\end{equation}
This expression $\mathcal{B}$ will be the desired Bellman function.
The goal will be achieved by mathematical induction on $\vkord(\mathbf{p})$
and for this we will need the following crucial reduction lemma.

\begin{lemma}
\label{vklemmareduction}
For any $\mathbf{p}\in\Omega_{m,n}$ there exists an averaging paraproduct-type term
$\mathcal{B}^{(\mathbf{p})}=\mathcal{B}_{Q}^{(\mathbf{p})}\big((F_{i,j})_{(i,j)\in E}\big)$
satisfying \emph{(\ref{vkeqbcontrol})} and such that for any $0<\delta<1$ we have the estimate
$$ |\mathcal{A}^{(\mathbf{p})}| \ \leq\ \Box\mathcal{B}^{(\mathbf{p})}
\ +\ C_{m,n}\,\delta^{-1}\!\!\!\!\!\!\sum_{\substack{\widetilde{\mathbf{p}}\in\Omega_{m,n}\\
\vkord(\widetilde{\mathbf{p}})<\vkord(\mathbf{p})}}
\!\!\!\!\!\!\!|\mathcal{A}^{(\widetilde{\mathbf{p}})}|
\ +\ C_{m,n}\,\delta\!\!\!\!\!\!\sum_{\substack{\widetilde{\mathbf{p}}\in\Omega_{m,n}\\
\vkord(\widetilde{\mathbf{p}})\geq\vkord(\mathbf{p})}}
\!\!\!\!\!\!\!|\mathcal{A}^{(\widetilde{\mathbf{p}})}| $$
with some constant $C_{m,n}>0$.
\end{lemma}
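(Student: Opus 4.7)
The strategy is to pair up two selected variables of $\mathcal{A}^{(\mathbf{p})}$ and apply Cauchy--Schwarz combined with AM--GM. By the symmetry between $x$- and $y$-variables we may assume $\sum_i\alpha_i\geq 2$ and pick two selected copies $x_{i_1}^{(\mu_1)},x_{i_2}^{(\mu_2)}$ (possibly with $i_1=i_2$). Isolating the two factors of the product $\Phi=\prod F_{i,j}(x_i^{(\mu)},y_j^{(\nu)})$ that depend on these variables and observing that $\vkw{\cdot}_{x_{i_1}^{(\mu_1)}}\vkw{\cdot}_{x_{i_2}^{(\mu_2)}}$ factors through, Cauchy--Schwarz in the remaining $y$-averages followed by AM--GM with parameter $\delta$ yields
\[|\mathcal{A}^{(\mathbf{p})}|\ \leq\ \tfrac{1}{2}\delta^{-1}X_1+\tfrac{1}{2}\delta X_2,\]
where $X_1,X_2$ are nonnegative paraproduct-type terms in which one of the paired $x$-variables has been replaced by two copies of the other.

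The combinatorial point is that each $X_k$ is of the form $\mathcal{A}^{(\widetilde{\mathbf{p}})}$ (up to obvious factorizations), and its composition type is obtained from $\vkcomp(\mathbf{p})$ by merging $(a_{i_1},a_{i_2})$ into $(a_{i_1}+a_{i_2},0)$. When $i_1\neq i_2$ this strictly increases the maximum entry of the decreasing rearrangement, placing $\vkparti(\widetilde{\mathbf{p}})$ strictly before $\vkparti(\mathbf{p})$ in the inverse-lexicographic order, so $\vkord(\widetilde{\mathbf{p}})<\vkord(\mathbf{p})$; in this ``cross-block'' subcase both pieces go into the $\delta^{-1}\sum_<$ bucket and one takes $\mathcal{B}^{(\mathbf{p})}\equiv 0$. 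When $i_1=i_2$ (the ``same-block'' subcase, requiring $\alpha_{i_1}\geq 2$) no merger occurs and $\vkord(\widetilde{\mathbf{p}})=\vkord(\mathbf{p})$, but the selected $\vkw\vkw$-brackets now combine into a sign-definite square on the $X_2$-side.

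In the same-block subcase I would take $\mathcal{B}^{(\mathbf{p})}$ to be the averaging paraproduct-type term obtained from $X_2$ by replacing every $\vkw$-bracket by the corresponding $\vkq$-bracket, scaled by $\tfrac{1}{2}\delta$ so as to absorb the AM--GM factor. Theorem \ref{vktheoremterms} then realizes $\tfrac{1}{2}\delta X_2$ as one of the summands in $\Box\mathcal{B}^{(\mathbf{p})}$; all remaining summands are paraproduct-type terms with the same composition type as $\mathbf{p}$, hence the same $\vkord$, so they fit into the $\delta\sum_\geq$ bucket with the required $\delta$-coefficient. The lower-order term $\tfrac{1}{2}\delta^{-1}X_1$ fits into the $\delta^{-1}\sum_<$ bucket, and combining the estimates yields the stated inequality.

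The boundedness requirement $\max_Q|\mathcal{B}^{(\mathbf{p})}_Q|\lesssim_{m,n}1$ is ensured by Lemma \ref{vklemmahoelder} applied to the factor product underlying $\mathcal{B}^{(\mathbf{p})}_Q$, together with the normalization \eqref{vkeqfnormalize} which gives $\vkq{F_{i,j}^{\max\{m,n\}}}_Q^{1/\max\{m,n\}}\leq 1$ for every $Q\in\mathcal{T}\cup\mathcal{L}(\mathcal{T})$. The main obstacle I anticipate is precisely the same-block case: one must verify that the complementary summands produced by Theorem \ref{vktheoremterms} really do all have $\vkord\geq\vkord(\mathbf{p})$ and that their combinatorial prefactors are uniformly controlled by $C_{m,n}$ after the $\tfrac{1}{2}\delta$ rescaling of $\mathcal{B}^{(\mathbf{p})}$. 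If a naive averaging choice of $\mathcal{B}^{(\mathbf{p})}$ produces wayward cross-terms in the wrong bucket, one may need a refined Bellman function depending explicitly on $\delta$, or an additional Cauchy--Schwarz step to reshape those cross-terms into standard form.
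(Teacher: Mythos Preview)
Both subcases have genuine gaps. In the cross-block case, AM--GM on the two isolated brackets does \emph{not} merge $(a_{i_1},a_{i_2})$ into $(a_{i_1}+a_{i_2},0)$: squaring one bracket removes one copy of $x_{i_2}$ and adds one copy of $x_{i_1}$, producing composition types $(a_{i_1}\!+\!1,\,a_{i_2}\!-\!1,\ldots)$ and $(a_{i_1}\!-\!1,\,a_{i_2}\!+\!1,\ldots)$. Only the former is guaranteed to have strictly smaller $\vkord$, and only after one arranges $a_{i_1}\geq a_{i_2}$; the paper routes the latter to the $\delta\sum_{\geq}$ bucket, not to $\delta^{-1}\sum_{<}$ as you claim.

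The same-block case is more seriously broken. When $i_1=i_2$ the two isolated factors are identical, so $X_1=X_2$ and both carry the \emph{same} composition type as $\mathbf{p}$; the AM--GM step is vacuous and your assertion that $\tfrac12\delta^{-1}X_1$ fits into $\delta^{-1}\sum_{<}$ is simply false, making the estimate circular. Scaling $\mathcal{B}^{(\mathbf{p})}$ by $\tfrac{\delta}{2}$ also violates the lemma's quantifier order (``there exists $\mathcal{B}^{(\mathbf{p})}$\ldots\ such that for any $\delta$\ldots''). The paper handles this concentrated-selection case without any preliminary AM--GM: one takes the \emph{unscaled} averaging term with composition type $\mathbf{q}=\vkcomp(\mathbf{p})$, expands $\Box\mathcal{B}^{(\mathbf{p})}$ via Theorem~\ref{vktheoremterms}, and splits the resulting summands into three groups. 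The terms with a single nonzero $\alpha'_i$ and all $\beta'_j=0$ are individually nonnegative and already dominate $\mathcal{A}^{(\mathbf{p})}$; the terms with a single nonzero $\beta'_j$ are not individually sign-definite, but their \emph{sum over all $x$-selection patterns} is nonnegative by Lemma~\ref{vklemmaterms}, so they can be discarded outright; only the terms with selections spread over at least two $x$-blocks or two $y$-blocks require the $\delta$-splitting, and those feed back into the cross-block estimate. The positivity step via Lemma~\ref{vklemmaterms} is the key ingredient missing from your outline.
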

\begin{proof}[Proof of Lemma \ref{vklemmareduction}.]
We distinguish two cases depending on positions of selected vertices,
i.e.\@ on the last $m+n$ coordinates in (\ref{vkeqptuple}).

\begin{list}{\labelitemi}{\setlength{\leftmargin}{0em}
\setlength{\itemsep}{5pt}\setlength{\itemindent}{0em}}
\item[]\emph{Case 1.} \
$\alpha_{i}\neq 0$ for at least two indices $i\in\{1,\ldots,m\}$
\,or\, $\beta_{j}\neq 0$ for at least two indices $j\in\{1,\ldots,n\}$.

By symmetry we may assume that $\alpha_1,\alpha_2\geq 1$ and $a_1\geq a_2$.
In this case we simply take $\mathcal{B}^{(\mathbf{p})}\equiv 0$.\,
Using $|\vkw{f(y)}_{y}|\leq\vkq{|f(y)|}_{y}$ and
$|AB|\leq\frac{1}{2\delta}A^2+\frac{\delta}{2}B^2$ we estimate:
\begin{align*}
|\mathcal{A}^{(\mathbf{p})}| & \leq \bigg[
\,\Big|\vkwwww{\prod_{\substack{1\leq j\leq n\\ 1\leq \nu\leq b_j}}\!\! F_{1,j}(x_1,y_j^{(\nu)})}_{\!x_1}
\vkwwww{\prod_{\substack{1\leq j\leq n\\ 1\leq \nu\leq b_j}}\!\! F_{2,j}(x_2,y_j^{(\nu)})}_{\!x_2}\Big| \\[-2mm]
& \qquad\qquad\qquad\qquad\qquad\qquad\quad\prod_{(i,\mu)\neq (1,1),(2,1)}\!\!
\vkqqqq{\prod_{\substack{1\leq j\leq n\\ 1\leq \nu\leq b_j}}\!\! F_{i,j}(x_i^{(\mu)}\!,y_j^{(\nu)})}_{x_i^{(\mu)}}
\bigg]_{\textrm{all }y_j^{(\nu)}} \\
& \leq \,\frac{1}{2\delta}\, \bigg[
\vkwwww{\prod_{\substack{1\leq j\leq n\\ 1\leq \nu\leq b_j}}\!\! F_{1,j}(x_1,y_j^{(\nu)})}_{\!x_1}^2
\!\prod_{(i,\mu)\neq (1,1),(2,1)}\!\!
\vkqqqq{\prod_{\substack{1\leq j\leq n\\ 1\leq \nu\leq b_j}}\!\! F_{i,j}(x_i^{(\mu)}\!,y_j^{(\nu)})}_{x_i^{(\mu)}}
\bigg]_{\textrm{all }y_j^{(\nu)}} \\
& \ \ \,+ \frac{\delta}{2}\, \bigg[
\vkwwww{\prod_{\substack{1\leq j\leq n\\ 1\leq \nu\leq b_j}}\!\! F_{2,j}(x_2,y_j^{(\nu)})}_{\!x_2}^2
\!\prod_{(i,\mu)\neq (1,1),(2,1)}\!\!
\vkqqqq{\prod_{\substack{1\leq j\leq n\\ 1\leq \nu\leq b_j}}\!\! F_{i,j}(x_i^{(\mu)}\!,y_j^{(\nu)})}_{x_i^{(\mu)}}
\bigg]_{\textrm{all }y_j^{(\nu)}} \\
& = \frac{1}{2}\Big(\delta^{-1}\mathcal{A}^{(\widetilde{\mathbf{p}})}
+ \delta\,\mathcal{A}^{(\overline{\mathbf{p}})}\Big) .
\end{align*}
Here, $\widetilde{\mathbf{p}},\,\overline{\mathbf{p}}\in\Omega_{m,n}$ are defined as follows.
If $\mathbf{p}$ is given by a $(2m+2n)$-tuple (\ref{vkeqptuple}),
then $\widetilde{\mathbf{p}}$ and $\overline{\mathbf{p}}$ will have coordinates:
$$ \begin{array}{l}\begin{array}{ll}
\widetilde{a}_i = \left\{\rule{0cm}{8mm}\right.\!\!\begin{array}{cl}
a_1+1, & \textrm{ for }i=1, \\ a_2-1, & \textrm{ for }i=2, \\ a_i, & \textrm{ for }i\neq 1,2, \end{array}
& \overline{a}_i = \left\{\rule{0cm}{8mm}\right.\!\!\begin{array}{cl}
a_1-1, & \textrm{ for }i=1, \\ a_2+1, & \textrm{ for }i=2, \\ a_i, & \textrm{ for }i\neq 1,2, \end{array} \\
\widetilde{\alpha}_i = \left\{\rule{0cm}{5.5mm}\right.\!\!\begin{array}{cl}
2, & \textrm{ for }i=1, \\ 0, & \textrm{ for }i\neq 1, \end{array}
& \overline{\alpha}_i = \left\{\rule{0cm}{5.5mm}\right.\!\!\begin{array}{cl}
2, & \textrm{ for }i=2, \\ 0, & \textrm{ for }i\neq 2, \end{array} \end{array} \\
\ \,\widetilde{b}_j = \overline{b}_j = b_j,\ \,
\widetilde{\beta}_j = \overline{\beta}_j = 0\ \,\textrm{ for every }j .
\end{array} $$
Observe that $a_1$ appears to the left from $a_2$ in the list $a^{\ast}_1\geq\ldots\geq a^{\ast}_m$.
Therefore simultaneously increasing $a_1$ by $1$ and decreasing $a_2$ by $1$ we produce
a lexicographically larger partition of $m$, so we conclude
$\vkord(\widetilde{\mathbf{p}})<\vkord(\mathbf{p})$.
On the other hand, both \,$\vkord(\overline{\mathbf{p}})<\vkord(\mathbf{p})$\, and
\,$\vkord(\overline{\mathbf{p}})\geq\vkord(\mathbf{p})$\, are possible,
where in the former case we use $\delta<\delta^{-1}$.

\item[]\emph{Case 2.} \
$\alpha_{i}\neq 0$ for at most one index $i\in\{1,\ldots,m\}$
\,and\, $\beta_{j}\neq 0$ for at most one index $j\in\{1,\ldots,n\}$.

Without loss of generality we assume
$\alpha_{i}=0$ for $i\neq 1$ and $\beta_{j}=0$ for $j\neq 1$.
Note that $\alpha_{1}$ and $\beta_{1}$ are even and at least one of them is nonzero, say $\alpha_{1}\geq 2$.
Since
$$ |\mathcal{A}^{(\mathbf{p})}| \leq \bigg[
\vkwwww{\prod_{\substack{1\leq j\leq n\\ 1\leq \nu\leq b_j}}\!\! F_{1,j}(x_1,y_j^{(\nu)})}_{\!x_1}^2
\prod_{(i,\mu)\neq (1,1),(1,2)}\!\!
\vkqqqq{\prod_{\substack{1\leq j\leq n\\ 1\leq \nu\leq b_j}}\!\! F_{i,j}(x_i^{(\mu)}\!,y_j^{(\nu)})}_{x_i^{(\mu)}}
\bigg]_{\textrm{all }y_j^{(\nu)}} , $$
we can also assume $\alpha_1=2$, $\beta_1=0$.\,
Consider
$$ \mathcal{B}_{I\times J}^{(\mathbf{p})} :=
\vkqqqq{\prod_{\substack{1\leq i\leq m\\ 1\leq j\leq n}}
\,\prod_{\substack{1\leq \mu\leq a_i\\ 1\leq \nu\leq b_j}}\! F_{i,j}(x_i^{(\mu)}\!,y_j^{(\nu)})
}_{\substack{x_i^{(\mu)}\in I \textrm{ for all }(i,\mu)\\
y_j^{(\nu)}\in J \textrm{ for all }(j,\nu)}} . $$
Indeed, $\mathcal{B}^{(\mathbf{p})}$ depends only on $\mathbf{q}=\vkcomp(\mathbf{p})$.
Observe that Lemma \ref{vklemmahoelder} and Normalization (\ref{vkeqfnormalize}) guarantee
Condition (\ref{vkeqbcontrol}).
Theorem \ref{vktheoremterms} gives the equality
\begin{equation}
\label{vkeqboxequality}
\Box\mathcal{B}^{(\mathbf{p})}
= \!\sum_{\substack{\mathbf{p}'\in\Omega_{m,n}\\ \vkcomp(\mathbf{p}')=\mathbf{q}}}
\!\!\!{\mathbf{q}\choose \mathbf{p}'}\,\mathcal{A}^{(\mathbf{p}')} ,
\end{equation}
where
$$ {\mathbf{q}\choose \mathbf{p}'} := \prod_{i=1}^{m}\!{a_i\choose \alpha'_i} \,\prod_{j=1}^{n}\!{b_j\choose \beta'_j} $$
counts the repeating terms $\mathcal{A}^{(\mathbf{p}')}$.
Note that $1\leq {\mathbf{q}\choose \mathbf{p}'}\lesssim_{m,n} 1$.\,
Let us split the summation set
$$ \Omega_{m,n,\mathbf{q}} := \big\{\mathbf{p}'\in\Omega_{m,n} : \,\vkcomp(\mathbf{p}')=\mathbf{q}\big\}$$
into three parts,
\begin{align*}
\Omega_{m,n,\mathbf{q}}^{(1)} :=\, & \big\{\mathbf{p}'\in\Omega_{m,n,\mathbf{q}} :
\, \alpha'_{i}\neq 0 \textrm{ for exactly one } i \,\textrm{ and }\, \beta'_{j}=0 \textrm{ for every } j\big\} , \\
\Omega_{m,n,\mathbf{q}}^{(2)} :=\, & \big\{\mathbf{p}'\in\Omega_{m,n,\mathbf{q}} :
\, \beta'_{j}\neq 0 \textrm{ for exactly one } j\big\} , \\
\Omega_{m,n,\mathbf{q}}^{(3)} :=\, & \big\{\mathbf{p}'\in\Omega_{m,n,\mathbf{q}} :
\, \alpha'_{i}\neq 0 \textrm{ for at least two } i \,\textrm{ and }\, \beta'_{j}=0 \textrm{ for every } j\big\} \\
& \cup\big\{\mathbf{p}'\in\Omega_{m,n,\mathbf{q}} :
\, \beta'_{j}\neq 0 \textrm{ for at least two } j\big\} .
\end{align*}
First, observe that $\mathbf{p}\in\Omega^{(1)}_{m,n,\mathbf{q}}$ and that
$$ \mathcal{A}^{(\mathbf{p}')} = \bigg[
\vkwwww{\!\!\prod_{\substack{1\leq j\leq n\\ 1\leq \nu\leq b_j}}\!\! F_{i_0,j}(x_{i_0},y_j^{(\nu)})\!}_{\!x_{i_0}}^{2\sigma}
\prod_{\substack{(i,\mu)\textrm{ such that}\\ i\neq i_0 \textrm{ or } \mu>2\sigma}}\!\!
\vkqqqq{\!\prod_{\substack{1\leq j\leq n\\ 1\leq \nu\leq b_j}}\!\! F_{i,j}(x_i^{(\mu)}\!,y_j^{(\nu)})}_{x_i^{(\mu)}}
\bigg]_{\textrm{all }y_j^{(\nu)}} \!\geq 0 $$
for every $\mathbf{p}'\in\Omega^{(1)}_{m,n,\mathbf{q}}$, where $i_0$ is chosen such that $\alpha'_{i_0}=2\sigma\neq 0$.\,
In particular,
\begin{equation}
\label{vkeqsumcontrol1}
0 \leq \mathcal{A}^{(\mathbf{p})} \leq \!\sum_{\mathbf{p}'\in\Omega_{m,n,\mathbf{q}}^{(1)}}
\!\!\!{\mathbf{q}\choose \mathbf{p}'}\,\mathcal{A}^{(\mathbf{p}')} .
\end{equation}
Next, Lemma \ref{vklemmaterms} applied to
\begin{align*}
& \Psi = \Psi\big((x_i^{(\mu)})_{1\leq i\leq m,\ 1\leq\mu\leq a_i}\big) \\
& = \sum_{j_0=1}^{n}\!\sum_{\tau=1}^{\lfloor b_{j_0}/2\rfloor} \!{b_{j_0} \choose 2\tau}
\vkwwww{\!\!\prod_{\substack{1\leq i\leq m\\ 1\leq \mu\leq a_i}}\!\!
F_{i,j_0}(x_i^{(\mu)}\!,y_{j_0})\!}_{\!y_{j_0}}^{2\tau}
\prod_{\substack{(j,\nu)\textrm{ such that}\\ j\neq j_0 \textrm{ or } \nu>2\tau}}\!\!
\vkqqqq{\!\prod_{\substack{1\leq i\leq m\\ 1\leq \mu\leq a_i}}\!\!
F_{i,j}(x_i^{(\mu)}\!,y_j^{(\nu)})}_{y_j^{(\nu)}}\!\geq 0
\end{align*}
(where $\beta'_{j_0}=2\tau\neq 0$) yields
\begin{equation}
\label{vkeqsumcontrol2}
\sum_{\mathbf{p}'\in\Omega_{m,n,\mathbf{q}}^{(2)}}
\!\!\!{\mathbf{q}\choose \mathbf{p}'}\,\mathcal{A}^{(\mathbf{p}')} \geq 0 .
\end{equation}
Finally, each $\mathcal{A}^{(\mathbf{p}')}$ for
$\mathbf{p}'\in\Omega_{m,n,\mathbf{q}}^{(3)}$
can be controlled as in Case 1 to obtain
\begin{equation}
\label{vkeqsumcontrol3}
|\mathcal{A}^{(\mathbf{p}')}| \ \,\leq\
\delta^{-1}\!\!\!\!\!\!\!
\sum_{\substack{\widetilde{\mathbf{p}}\in\Omega_{m,n}\\ \vkord(\widetilde{\mathbf{p}})<\vkord(\mathbf{p})}}
\!\!\!\!\!\!\!|\mathcal{A}^{(\widetilde{\mathbf{p}})}|
\ +\ \delta\!\!\!\!\!\!
\sum_{\substack{\widetilde{\mathbf{p}}\in\Omega_{m,n}\\ \vkord(\widetilde{\mathbf{p}})\geq\vkord(\mathbf{p})}}
\!\!\!\!\!\!\!|\mathcal{A}^{(\widetilde{\mathbf{p}})}| .
\end{equation}
Combining (\ref{vkeqboxequality})--(\ref{vkeqsumcontrol3}) proves the stated inequality.\qedhere
\end{list}
\end{proof}

Figure \ref{vkfigurepartitiontree} depicts partition types in $\Omega^{\ast}_{2,4}$
and ways of controlling $\mathcal{A}^{(\mathbf{p})}$ as in Case 1 of the previous proof.
Different kinds of arrows represent different possibilities for various $\mathbf{p}\in\Omega_{2,4}$
with the same $\vkparti(\mathbf{p})$.
Labels $\delta$ and $\delta^{-1}$ represent coefficients in the ``reduction inequality'' for $\mathcal{A}^{(\mathbf{p})}$.
It is important that always at least one arrow (the one marked by $\delta^{-1}$) points
to a partition type with smaller rank, i.e.\@ points downwards or to the right in the picture.
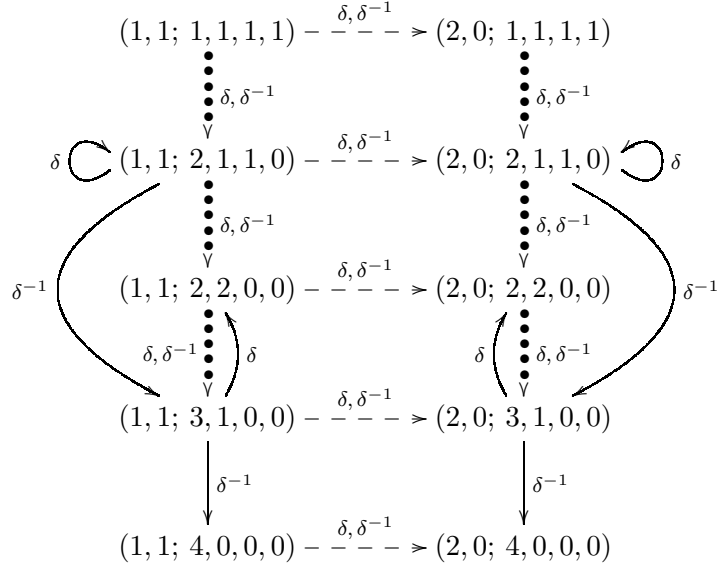
\begin{figure}[th]
{\small $$ \xymatrix @C=1.6cm @R=1.1cm {
(1,1;\,1,1,1,1)
\ar@{-->}[r]^{\delta,\,\delta^{-1}}
\ar@{{}{}{>}}[d]^{\,\delta,\,\delta^{-1}}
& (2,0;\,1,1,1,1)
\ar@{{}{}{>}}[d]^{\,\delta,\,\delta^{-1}} \\
(1,1;\,2,1,1,0)
\ar@{-->}[r]^{\delta,\,\delta^{-1}}
\ar@{{}{}{>}}[d]^{\,\delta,\,\delta^{-1}}
\ar@{->}@/_{2cm}/[dd]_{\delta^{-1}}
\save !L(0.9) \ar@{->}@(dl,ul)^{\delta} \restore
\save !<0ex,3.6ex> \ar@{{}{*}{}}[u] \restore
& (2,0;\,2,1,1,0)
\ar@{{}{}{>}}[d]^{\,\delta,\,\delta^{-1}} \ar@{->}@/^{2cm}/[dd]^{\delta^{-1}}
\save !R(0.9) \ar@{->}@(dr,ur)_{\delta} \restore
\save !<0ex,3.6ex> \ar@{{}{*}{}}[u] \restore \\
(1,1;\,2,2,0,0)
\ar@{-->}[r]^{\delta,\,\delta^{-1}}
\ar@{{}{}{>}}[d]_{\delta,\,\delta^{-1}}
\save !<0ex,3.6ex> \ar@{{}{*}{}}[u] \restore
& (2,0;\,2,2,0,0)
\ar@{{}{}{>}}[d]^{\,\delta,\,\delta^{-1}}
\save !<0ex,3.6ex> \ar@{{}{*}{}}[u] \restore \\
(1,1;\,3,1,0,0)
\ar@{-->}[r]^{\delta,\,\delta^{-1}}
\ar@{->}[d]^{\delta^{-1}}
\ar@{->}@/_{0.4cm}/[u]_{\delta}
\save !<0ex,3.6ex> \ar@{{}{*}{}}[u] \restore
& (2,0;\,3,1,0,0)
\ar@{->}[d]^{\delta^{-1}} \ar@{->}@/^{0.4cm}/[u]^{\delta}
\save !<0ex,3.6ex> \ar@{{}{*}{}}[u] \restore \\
(1,1;\,4,0,0,0)
\ar@{-->}[r]^{\delta,\,\delta^{-1}} & (2,0;\,4,0,0,0)
} $$ }
\vspace*{-4mm}
\caption{Recursive control of partition types in $\Omega^{\ast}_{2,4}$.}
\label{vkfigurepartitiontree}
\end{figure}

\begin{lemma}
\label{vklemmafinal}
\ There exists a ``universal'' averaging paraproduct-type expression\linebreak
$\mathcal{B}^{(m,n)}=\mathcal{B}^{(m,n)}_{I\times J}\big((F_{i,j})_{(i,j)\in E}\big)$
satisfying \emph{(\ref{vkeqbcontrol})} and
$$ \sum_{\mathbf{p}\in\Omega_{m,n}} \!\!|\mathcal{A}^{(\mathbf{p})}|
\ \leq \ \Box\mathcal{B}^{(m,n)} . $$
\end{lemma}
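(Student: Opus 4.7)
The plan is to iterate Lemma \ref{vklemmareduction} finitely many times via a quantitative induction on rank that, at every step, absorbs the ``bad'' $\delta^{-1}$-factor appearing in the reduction inequality. Let $R := \max\{\vkord(\mathbf{p}) : \mathbf{p}\in\Omega_{m,n}\}$, write $a_\mathbf{p} := |\mathcal{A}^{(\mathbf{p})}|$, and set $\Sigma_{\leq k} := \sum_{\vkord(\mathbf{p})\leq k} a_\mathbf{p}$ and $\Sigma_{>k} := \sum_{\vkord(\mathbf{p})>k} a_\mathbf{p}$. The inductive statement to be proved is: for every $k\in\{0,1,\ldots,R\}$ and every $\varepsilon>0$ there exists an averaging paraproduct-type expression $\mathcal{D}^{(k,\varepsilon)}$ satisfying \emph{(\ref{vkeqbcontrol})} (with a constant depending on $m,n,\varepsilon$) such that
$$\Sigma_{\leq k} \,\leq\, \Box\mathcal{D}^{(k,\varepsilon)} + \varepsilon \, \Sigma_{>k} .$$
At $k=R$ the remainder vanishes, so $\mathcal{B}^{(m,n)}:=\mathcal{D}^{(R,1)}$ establishes the lemma; the base case $k=0$ is trivial with $\mathcal{D}^{(0,\varepsilon)}\equiv 0$.

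The inductive step $k\to k+1$ proceeds in three stages. First, apply Lemma \ref{vklemmareduction} with a common parameter $\delta>0$ to each of the finitely many $\mathbf{p}$ with $\vkord(\mathbf{p})=k+1$, sum, split the $\vkord\geq k+1$ contribution as $\Sigma_{k+1}+\Sigma_{>k+1}$ where $\Sigma_{k+1}:=\sum_{\vkord(\mathbf{p})=k+1}a_\mathbf{p}$, and absorb the $\Sigma_{k+1}$ term on the left by requiring $N_{k+1}C_{m,n}\delta\leq 1/2$; this yields
$$\Sigma_{k+1} \,\leq\, 2\!\!\!\sum_{\vkord(\mathbf{p})=k+1}\!\!\!\Box\mathcal{B}^{(\mathbf{p})} + 2N_{k+1}C_{m,n}\delta^{-1}\Sigma_{\leq k} + 2N_{k+1}C_{m,n}\delta\,\Sigma_{>k+1}.$$
Second, invoke the inductive hypothesis with a suitably small parameter $\varepsilon'>0$ to replace $\Sigma_{\leq k}$ by $\Box\mathcal{D}^{(k,\varepsilon')}+\varepsilon'(\Sigma_{k+1}+\Sigma_{>k+1})$, and absorb the freshly produced $\Sigma_{k+1}$ term once more, which forces $\varepsilon'\ll\delta$. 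Third, add the inductive bound on $\Sigma_{\leq k}$ to the resulting bound on $\Sigma_{k+1}$ to obtain $\Sigma_{\leq k+1}\leq \Box\mathcal{D}^{(k+1,\varepsilon)}+\eta\,\Sigma_{>k+1}$ with $\eta$ of order $\delta+\delta^{-1}\varepsilon'+\varepsilon'$. Choosing $\delta\sim\varepsilon/N_{k+1}$ and then $\varepsilon'\sim\varepsilon^2$ (with implicit constants depending on $m,n$) makes $\eta\leq\varepsilon$ and closes the induction. Linearity of the operator $\Box$ and closure of averaging paraproduct-type expressions under finite linear combinations with bounded coefficients guarantee that $\mathcal{D}^{(k+1,\varepsilon)}$ is again of the required form, and \emph{(\ref{vkeqbcontrol})} is preserved, with the constant inflating by a factor depending only on $m,n,\varepsilon$.

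The main obstacle is precisely the competition between the small factor $\delta$ and the large factor $\delta^{-1}$ in Lemma \ref{vklemmareduction}: a direct simultaneous summation over all $\mathbf{p}$ produces a $\delta^{-1}$ coefficient in front of the entire sum that no single choice of $\delta$ can absorb. The slack term ``$\varepsilon\,\Sigma_{>k}$'' in the inductive statement is what lets the $\delta^{-1}$-factor be ``paid for'' by the arbitrarily small prior constant $\varepsilon'$ supplied by the inductive hypothesis, while the diagonal $\Sigma_{k+1}$ contribution is killed by taking $\delta$ small. Because $R$ and the cardinalities $N_{k+1}$ are functions of $m,n$ alone, the recursively inflated constant at $k=R$ still depends only on $m,n$, as demanded.
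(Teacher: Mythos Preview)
Your proposal is correct and follows essentially the same strategy as the paper: induction on rank with an $\varepsilon$-slack term $\varepsilon\,\Sigma_{>k}$, invoking the inductive hypothesis with $\varepsilon'\sim\varepsilon^2$ so that the product $\delta^{-1}\varepsilon'$ stays small when $\delta\sim\varepsilon$. The only organizational differences are that the paper applies Lemma~\ref{vklemmareduction} to all $\mathbf{p}$ with $\vkord(\mathbf{p})\leq\kappa+1$ (rather than only $\vkord(\mathbf{p})=k+1$) and performs a single absorption at the end, whereas you isolate rank $k+1$ and absorb twice; neither choice changes the substance of the argument.
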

\begin{proof}[Proof of Lemma \ref{vklemmafinal}.]
We prove the following claim by mathematical induction on
$\kappa\in\{0,1,2,\ldots,\mathfrak{p}_{\#}(m)\mathfrak{p}_{\#}(n)\}$: \
\emph{For every $0<\varepsilon<1$ there exists an averaging paraproduct-type expression
$\mathcal{B}^{(\kappa,\varepsilon)}$ satisfying
$$ \max_{Q\in\mathcal{T}\cup\mathcal{L}(\mathcal{T})}\big|\mathcal{B}^{(\kappa,\varepsilon)}_{Q}
\big((F_{i,j})_{(i,j)\in E}\big)\big|\,\lesssim_{m,n,\varepsilon} 1 $$
and
$$ \sum_{\substack{\mathbf{p}\in\Omega_{m,n}\\ \vkord(\mathbf{p})\leq\kappa}}
\!\!|\mathcal{A}^{(\mathbf{p})}|
\ \,\leq \ \,\Box\mathcal{B}^{(\kappa,\varepsilon)}
\ +\ \varepsilon\!\!\sum_{\substack{\mathbf{p}\in\Omega_{m,n}\\ \vkord(\mathbf{p})>\kappa}}
\!\!|\mathcal{A}^{(\mathbf{p})}| . $$}
The bound we need to prove is a special instance for $\kappa=\mathfrak{p}_{\#}(m)\mathfrak{p}_{\#}(n)$ and any $\varepsilon$,
since then the sum on the right disappears.
On the other hand, the sum on the left is $0$ for $\kappa=0$, so the inequality (trivially) holds
with $\mathcal{B}^{(\kappa,\varepsilon)}\equiv 0$.
We take $\kappa=0$ as the induction basis.

In the induction step we suppose that the claim holds for some
$0\!\leq\!\kappa\!\leq\!\mathfrak{p}_{\!\#}(m)\mathfrak{p}_{\!\#}(n)-1$.
To prove the claim for $\kappa+1$ take arbitrary $0<\varepsilon<1$.
Let $C_{m,n}$ be the constant from Lemma \ref{vklemmareduction}.
Induction hypothesis applied with $\varepsilon'=\big(\frac{\varepsilon}{4 C_{m,n} |\Omega_{m,n}|}\big)^2$
gives $\mathcal{B}^{(\kappa,\varepsilon')}$ such that
\begin{equation}
\label{vkeqindhyp}
\sum_{\substack{\mathbf{p}\in\Omega_{m,n}\\ \vkord(\mathbf{p})\leq\kappa}}
\!\!|\mathcal{A}^{(\mathbf{p})}|
\ \,\leq \ \,\Box\mathcal{B}^{(\kappa,\varepsilon')}
\ +\ \big({\textstyle\frac{\varepsilon}{4 C_{m,n} |\Omega_{m,n}|}}\big)^2
\!\!\sum_{\substack{\mathbf{p}\in\Omega_{m,n}\\ \vkord(\mathbf{p})>\kappa}}
\!\!|\mathcal{A}^{(\mathbf{p})}| .
\end{equation}
Lemma \ref{vklemmareduction} applied to each $\mathbf{p}\in\Omega_{m,n}$, $\vkord(\mathbf{p})\leq\kappa+1$,
and $\delta=\frac{\varepsilon}{4 C_{m,n} |\Omega_{m,n}|}$ yields
$$ \sum_{\substack{\mathbf{p}\in\Omega_{m,n}\\ \vkord(\mathbf{p})\leq\kappa+1}}
\!\!\!\!|\mathcal{A}^{(\mathbf{p})}| \ \leq\
\sum_{\substack{\mathbf{p}\in\Omega_{m,n}\\ \vkord(\mathbf{p})\leq\kappa+1}}
\!\!\!\!\Box\mathcal{B}^{(\mathbf{p})}
\ +\ {\textstyle\frac{4 C_{m,n}^2 |\Omega_{m,n}|^2}{\varepsilon}}\!\!\sum_{\substack{\mathbf{p}\in\Omega_{m,n}\\
\vkord(\mathbf{p})\leq\kappa}}\!\!\!|\mathcal{A}^{(\mathbf{p})}|
\ +\ {\textstyle\frac{\varepsilon}{4}}\!\sum_{\mathbf{p}\in\Omega_{m,n}}\!\!|\mathcal{A}^{(\mathbf{p})}| , $$
which combined with (\ref{vkeqindhyp}) leads to
$$ \sum_{\substack{\mathbf{p}\in\Omega_{m,n}\\ \vkord(\mathbf{p})\leq\kappa+1}}
\!\!\!\!|\mathcal{A}^{(\mathbf{p})}| \ \leq\
\sum_{\substack{\mathbf{p}\in\Omega_{m,n}\\ \vkord(\mathbf{p})\leq\kappa+1}}
\!\!\!\!\Box\mathcal{B}^{(\mathbf{p})}
\ +\ {\textstyle\frac{4 C_{m,n}^2 |\Omega_{m,n}|^2}{\varepsilon}}\,\Box\mathcal{B}^{(\kappa,\varepsilon')}
\ +\ {\textstyle\frac{\varepsilon}{2}}\!\sum_{\mathbf{p}\in\Omega_{m,n}}\!\!|\mathcal{A}^{(\mathbf{p})}| . $$
It remains to split $\sum_{\mathbf{p}\in\Omega_{m,n}}$ on the right hand side
according to \,$\vkord(\mathbf{p})\leq\kappa+1$\, or \,$\vkord(\mathbf{p})>\kappa+1$,\,
move the former summands to the left hand side, and multiply the inequality by $2$. We set\footnote{From this
recurrence relation for $\mathcal{B}^{(\kappa,\varepsilon)}$ we see that the final Bellman function
is assembled as a linear combination of familiar terms $\mathcal{B}^{(\mathbf{p})}$,
but the coefficients can theoretically grow ``very rapidly'' and might be hard to track down explicitly in concrete examples.}
\begin{equation*}
\mathcal{B}^{(\kappa+1,\,\varepsilon)}
\ = \ 2\!\!\!\sum_{\substack{\mathbf{p}\in\Omega_{m,n}\\ \vkord(\mathbf{p})\leq\kappa+1}}\!\!\!\!\!\mathcal{B}^{(\mathbf{p})}
\ +\ {\textstyle\frac{8 C_{m,n}^2 |\Omega_{m,n}|^2}{\varepsilon}}\,\mathcal{B}^{(\kappa,\varepsilon')} . \tag*{\qedhere}
\end{equation*}
\end{proof}

Let us finally complete the proof of Proposition \ref{vkpropositiontree} in this special case.
Even though the general term $\mathcal{A}$ of $\Lambda$
is not necessarily among $\mathcal{A}^{(\mathbf{p})}$, $\mathbf{p}\in\Omega_{m,n}$,
it can still be easily dominated by two of these terms.
Remember that $|S|\geq 2$ or $|T|\geq 2$, so
without loss of generality suppose $|S|\geq 2$ and take $i_1,i_2\in S$,\, $i_1\neq i_2$.
\begin{align*}
|\mathcal{A}| & \leq \bigg[
\,\Big|\vkwwww{\prod_{1\leq j\leq n}\!\! F_{i_1,j}(x_{i_1},y_j)}_{\!x_{i_1}}
\!\vkwwww{\prod_{1\leq j\leq n}\!\! F_{i_2,j}(x_{i_2},y_j)}_{\!x_{i_2}}\Big|
\ \prod_{i\neq i_1,i_2}\!\vkqqqq{\prod_{1\leq j\leq n}\!\! F_{i,j}(x_i,y_j)}_{x_{i}}
\bigg]_{y_1,\ldots,y_n} \\
& \leq \ \frac{1}{2}\bigg[
\vkwwww{\prod_{1\leq j\leq n}\!\! F_{i_1,j}(x_{i_1},y_j)}_{\!x_{i_1}}^2
\prod_{i\neq i_1,i_2}\!\vkqqqq{\prod_{1\leq j\leq n}\!\! F_{i,j}(x_i,y_j)}_{x_{i}}
\bigg]_{y_1,\ldots,y_n} \\
& \ \,+ \frac{1}{2}\bigg[
\vkwwww{\prod_{1\leq j\leq n}\!\! F_{i_2,j}(x_{i_2},y_j)}_{\!x_{i_2}}^2
\prod_{i\neq i_1,i_2}\!\vkqqqq{\prod_{1\leq j\leq n}\!\! F_{i,j}(x_i,y_j)}_{x_{i}}
\bigg]_{y_1,\ldots,y_n}
= \ \frac{1}{2}\mathcal{A}^{(\widetilde{\mathbf{p}})}
+ \frac{1}{2}\mathcal{A}^{(\overline{\mathbf{p}})}
\end{align*}
Above $\widetilde{\mathbf{p}},\,\overline{\mathbf{p}}\in\Omega_{m,n}$ are given by their components:
$$ \begin{array}{l}\begin{array}{ll}
\widetilde{a}_i = \left\{\rule{0cm}{8mm}\right.\!\!\begin{array}{cl}
2, & \textrm{ for }i=i_1, \\ 0, & \textrm{ for }i=i_2, \\ 1, & \textrm{ for }i\neq i_1,i_2, \end{array}
& \overline{a}_i = \left\{\rule{0cm}{8mm}\right.\!\!\begin{array}{cl}
0, & \textrm{ for }i=i_1, \\ 2, & \textrm{ for }i=i_2, \\ 1, & \textrm{ for }i\neq i_1,i_2, \end{array} \\
\widetilde{\alpha}_i = \left\{\rule{0cm}{5.5mm}\right.\!\!\begin{array}{cl}
2, & \textrm{ for }i=i_1, \\ 0, & \textrm{ for }i\neq i_1, \end{array}
& \overline{\alpha}_i = \left\{\rule{0cm}{5.5mm}\right.\!\!\begin{array}{cl}
2, & \textrm{ for }i=i_2, \\ 0, & \textrm{ for }i\neq i_2, \end{array} \end{array} \\
\ \,\widetilde{b}_j = \overline{b}_j = 1,\ \,
\widetilde{\beta}_j = \overline{\beta}_j = 0\ \,\textrm{ for every }j .
\end{array} $$
Lemma \ref{vklemmafinal}, Bound (\ref{vkeqbcontrol}),
and the general Estimate (\ref{vkeqmainteleestimate})
establish (\ref{vkeqsingletreenormalized}).

\subsection{Proof of Proposition \ref{vkpropositiontree} in full generality}
\label{vksubsectionfull}

Let us now consider a general triple $(E,S,T)$.
Suppose that there are $k$ connected components that contain at least one edge and list their vertex-sets as
\,$X_1\cup Y_1$, $X_2\cup Y_2$, \ldots, $X_k\cup Y_k$,\,
where $X_l\subseteq\{x_1,\ldots,x_m\}$ and $Y_l\subseteq\{y_1,\ldots,y_n\}$ for $l=1,\ldots,k$.
The numbers $d_{i,j}$ were defined to be
$$ d_{i,j} = d^{(l)} := \max\{|X_l|,|Y_l|\big\} $$
if \,$x_i\in X_l$,\, $y_j\in Y_l$\, for some $l\in\{1,2,\ldots,k\}$.\,
Since $d_{i,j}$ depends on the component only, we sometimes prefer to write it as $d^{(l)}$.
For notational convenience we also denote
$$ \mathcal{X}_l := \big\{i\in\{1,\ldots,m\} \,:\, x_i\in X_l\big\}, \quad\
\mathcal{Y}_l := \big\{j\in\{1,\ldots,n\} \,:\, y_j\in Y_l\big\}, $$
for $l=1,\ldots,k$.

Vertices in $\{x_1,\ldots,x_m\}\setminus(X_1\cup\ldots\cup X_k)$ and
$\{y_1,\ldots,y_n\}\setminus(Y_1\cup\ldots\cup Y_k)$ are \emph{isolated},
i.e.\@ no edge is incident to any of them.
Note that isolated vertices contribute to the form $\Lambda$ in a trivial way.
If $x_i$ is a non-selected isolated vertex, then no functions $F_{i,j}$ will have $x_i$ as a variable,
so the contribution of $x_i$ is only in the factor
$$ |I|^{-\frac{1}{2}}\int_{\mathbb{R}}\varphi^{\mathrm{D}}_I(x_i) dx_i = 1 . $$
On the other hand, if $x_i$ is a selected isolated vertex, then that factor will be
$$ |I|^{-\frac{1}{2}}\int_{\mathbb{R}}\psi^{\mathrm{D}}_I(x_i) dx_i = 0 , $$
so consequently $\Lambda\equiv 0$.
The same reasoning holds for isolated vertices $y_j$.
Therefore, from now on we assume that the associated bipartite graph has no isolated vertices.

By taking $F_{i,j}\equiv\mathbf{1}$ whenever $(i,j)\not\in E$ and
``completing'' each component of the graph, we can assume
$$ E = (\mathcal{X}_1 \times \mathcal{Y}_1) \cup \ldots \cup (\mathcal{X}_k \times \mathcal{Y}_k) . $$
Note that the completed graph still has the same characteristic quantities $d_{i,j}$
and that functions $F_{i,j}$ identically equal to $1$ do not contribute to
either of the sides in (\ref{vkeqsingletree}).
Connected components are useful because the associated forms $\Lambda_{\mathcal{T}}$ split as
$$ \Lambda_{\mathcal{T}}\big((F_{i,j})_{(i,j)\in E}\big)
:= \sum_{Q\in\mathcal{T}} |Q| \,\prod_{l=1}^{k}\big|\mathcal{A}_{Q}^{(l)}
\big((F_{i,j})_{(i,j)\in\mathcal{X}_l\times\mathcal{Y}_l}\big)\big|, $$
with
$$ \mathcal{A}_{I\times J}^{(l)} =
\vkqqqq{\vkwwww{\!\prod_{i\in\mathcal{X}_l,\, j\in\mathcal{Y}_l}\!\!\! F_{i,j}(x_i,y_j)
}_{\substack{x_i\in I \textrm{ for } i\in S\cap\mathcal{X}_l\\ y_j\in J \textrm{ for } j\in T\cap\mathcal{Y}_l}\,}
}_{\substack{x_i\in I \textrm{ for } i\in S^c\cap\mathcal{X}_l\\ y_j\in J \textrm{ for } j\in T^c\cap\mathcal{Y}_l}} . $$

We distinguish two cases with respect to the distribution of selected vertices in the graph.
Recall once again that $|S|\geq 2$ or $|T|\geq 2$, which guarantees that the following two cases
cover all possibilities, although not necessarily disjoint ones.

\smallskip
\begin{list}{\labelitemi}{\setlength{\leftmargin}{0em}
\setlength{\itemsep}{5pt}\setlength{\itemindent}{0pt}}
\item[]\emph{Case 1.} \
$|S\cap\mathcal{X}_{l_0}|\geq 2$ \,or\, $|T\cap\mathcal{Y}_{l_0}|\geq 2$ \,for some index $l_0\in\{1,\ldots,k\}$.

In words, some connected component contains two selected vertices
in one of its bipartition classes.
From the previously proven case of Proposition \ref{vkpropositiontree} applied to
the complete bipartite graph with vertex-sets $X_{l_0}$,\,$Y_{l_0}$ we have the estimate
$$ \sum_{Q\in\mathcal{T}} |Q| \,\big|\mathcal{A}_{Q}^{(l_0)}
\big((F_{i,j})_{(i,j)\in\mathcal{X}_{l_0}\times\mathcal{Y}_{l_0}}\big)\big|
\,\lesssim_{m,n} |Q_\mathcal{T}|\!\prod_{i\in\mathcal{X}_{l_0}\!,\, j\in\mathcal{Y}_{l_0}}\!
\max_{Q\in\mathcal{T}\cup\mathcal{L}(\mathcal{T})}\vkqqq{F_{i,j}^{d^{(l_0)}}}_{Q}^{1/d^{(l_0)}} = 1 . $$
Applying Lemma \ref{vklemmahoelder} to the functions
$(F_{i,j})_{i\in\mathcal{X}_l,\,j\in\mathcal{Y}_l}$ we get
$$ \big|\mathcal{A}_{Q}^{(l)}\big((F_{i,j})_{(i,j)\in\mathcal{X}_l\times\mathcal{Y}_l}\big)\big|
\leq\vkqqqq{\!\prod_{i\in\mathcal{X}_l,\, j\in\mathcal{Y}_l}\!\!\! F_{i,j}(x_i,y_j)
}_{\substack{x_i\in I \textrm{ for } i\in\mathcal{X}_l\\ y_j\in J \textrm{ for } j\in\mathcal{Y}_l}}
\leq \!\prod_{i\in\mathcal{X}_l,\, j\in\mathcal{Y}_l}\!\!\! \vkqqq{F_{i,j}^{d^{(l)}}}_{Q}^{1/d^{(l)}} \leq 1 $$
for each $Q=I\times J\in\mathcal{T}$ and each $l\neq l_0$.
This establishes (\ref{vkeqsingletreenormalized}).
\pagebreak

\item[]\emph{Case 2.} \
$S\cap\mathcal{X}_{l_1}\neq\emptyset\neq S\cap\mathcal{X}_{l_2}$ \,or\,
$T\cap\mathcal{Y}_{l_1}\neq\emptyset\neq T\cap\mathcal{Y}_{l_2}$
\,for two different indices $l_1,l_2\in\{1,\ldots,k\}$.

In words, there exist two connected components each containing at least one selected $x$-vertex
or each containing at least one selected $y$-vertex.
Without loss of generality suppose
$S\cap\mathcal{X}_{1}\neq 0$ and $S\cap\mathcal{X}_{2}\neq 0$.
Using Lemma \ref{vklemmahoelder} we can estimate the general term as
$$ |\mathcal{A}| \,=\, |\mathcal{A}^{(1)}| |\mathcal{A}^{(2)}| |\mathcal{A}^{(3)}| \ldots |\mathcal{A}^{(k)}|
\,\leq\, |\mathcal{A}^{(1)}| |\mathcal{A}^{(2)}|
\,\leq\, \frac{1}{2}\big(\mathcal{A}^{(1)}\big)^2 + \frac{1}{2}\big(\mathcal{A}^{(2)}\big)^2 . $$
By symmetry it is enough to handle $(\mathcal{A}^{(1)})^2$
and by renaming vertices we may assume
$\mathcal{X}_1=\{1,\ldots,m_1\}$, $\mathcal{Y}_1=\{1,\ldots,n_1\}$, and $1\in S$.

\smallskip
If $m_1=n_1=1$, then from the second row of Table \ref{vktableexamples},
\begin{align*}
& (\mathcal{A}^{(1)})^2
= \vkq{\vkw{F_{1,1}(x_1,y_1)}_{x_1}}_{y_1}^2 \textrm{ or } \vkw{F_{1,1}(x_1,y_1)}_{x_1,y_1}^2 \\[-5mm]
& \leq \vkq{\vkw{F_{1,1}(x_1,y_1)}_{x_1}}_{y_1}^2 + \vkw{F_{1,1}(x_1,y_1)}_{x_1,y_1}^2
+ \vkw{\vkq{F_{1,1}(x_1,y_1)}_{x_1}}_{y_1}^2
= \Box\big(\overbrace{\vkq{F_{1,1}(x_1,y_1)}_{x_1,y_1}^2}^{\mathcal{B}}\big) .
\end{align*}
We separate this special case because now $d^{(1)}=1$
(as for classical paraproducts), so the normalization (\ref{vkeqfnormalize})
does not control averages of higher powers of $F_{1,1}$.
However, the first power is enough here:
$$ \max_{Q\in\mathcal{T}\cup\mathcal{L}(\mathcal{T})} \mathcal{B}_{Q}(F_{1,1})
\,=\, \Big(\max_{Q\in\mathcal{T}\cup\mathcal{L}(\mathcal{T})}\vkq{F_{1,1}}_{Q}\Big)^2 = 1 . $$

On the other hand, the condition $d^{(1)}\geq 2$ is ensured if \,$m_1=1$, $n_1\geq 2$,\,
so then we bound $(\mathcal{A}^{(1)})^2$ as
$$ (\mathcal{A}^{(1)})^2 \,\leq\, \bigg[
\,\Big|\vkwwww{\prod_{j=1}^{n_1} F_{1,j}(x_1,y_j)}_{\!x_1}\Big|\,
\bigg]_{y_1,\ldots,y_{n_1}}^2
\!\!\leq \,\bigg[
\vkwwww{\prod_{j=1}^{n_1} F_{1,j}(x_1,y_j)}_{\!x_1}^2
\bigg]_{y_1,\ldots,y_{n_1}} . $$
We can recognize the last term as $\mathcal{A}^{(\widetilde{\mathbf{p}})}$
for $\widetilde{\mathbf{p}}\in\Omega_{2,n_1}$ given by
\ $\widetilde{a}_1 = \widetilde{\alpha}_1 = 2$, \ $\widetilde{a}_2 = \widetilde{\alpha}_2 = 0$, \
$\widetilde{b}_j = 1,\,\widetilde{\beta}_j = 0\,$ for $1\leq j\leq n_1$.
Thus,
\,$(\mathcal{A}^{(1)})^2 \leq \mathcal{A}^{(\widetilde{\mathbf{p}})} \leq \Box\mathcal{B}^{(2,n_1)}$,\,
where $\mathcal{B}^{(2,n_1)}$ is from Lemma \ref{vklemmafinal}.

If $m_1\geq 2$, then one can write
with the help of the Cauchy-Schwarz inequality and Lemma \ref{vklemmahoelder} once again:
\begin{align*}
(\mathcal{A}^{(1)})^2 \ & \leq \ \bigg[
\,\Big|\vkwwww{\prod_{j=1}^{n_1} F_{1,j}(x_1,y_j)}_{\!x_1}\Big|
\ \,\vkqqqq{\prod_{j=1}^{n_1} F_{2,j}(x_2,y_j)}_{\!x_2}
\ \prod_{i=3}^{m_1}\vkqqqq{\prod_{j=1}^{n_1} F_{i,j}(x_i,y_j)}_{x_{i}}
\bigg]_{y_1,\ldots,y_{n_1}}^2 \\
& \leq \ \ \underbrace{\bigg[
\vkwwww{\prod_{j=1}^{n_1} F_{1,j}(x_1,y_j)}_{\!x_1}^2
\ \prod_{i=3}^{m_1}\vkqqqq{\prod_{j=1}^{n_1} F_{i,j}(x_i,y_j)}_{x_{i}}
\bigg]_{y_1,\ldots,y_{n_1}}}_{\qquad\qquad\mathcal{A}^{(\overline{\mathbf{p}})}\leq\Box\mathcal{B}^{(m_1,n_1)}} \\
& \ \ \,\times \underbrace{\bigg[
\vkqqqq{\prod_{j=1}^{n_1} F_{2,j}(x_2,y_j)}_{x_2}^2
\ \prod_{i=3}^{m_1}\vkqqqq{\prod_{j=1}^{n_1} F_{i,j}(x_i,y_j)}_{x_{i}}
\bigg]_{y_1,\ldots,y_{n_1}}}_{\ \leq 1} ,
\end{align*}
where $\overline{\mathbf{p}}\in\Omega_{m_1,n_1}$ has coordinates
$$ \begin{array}{l}\begin{array}{ll}
\overline{a}_i = \left\{\rule{0cm}{8mm}\right.\!\!\begin{array}{cl}
2, & \textrm{ for }i=1, \\ 0, & \textrm{ for }i=2, \\ 1, & \textrm{ for }3\leq i\leq m_1, \end{array}
& \overline{\alpha}_i = \left\{\rule{0cm}{5.5mm}\right.\!\!\begin{array}{cl}
2, & \textrm{ for }i=1, \\ 0, & \textrm{ for }2\leq i\leq m_1, \end{array} \end{array} \\
\ \,\overline{b}_j = 1,\ \,
\overline{\beta}_j = 0\ \,\textrm{ for }1\leq j\leq n_1
\end{array} $$
and $\mathcal{B}^{(m_1,n_1)}$ is from Lemma \ref{vklemmafinal}.

In all three possibilities above the proof is finished by invoking
Estimate (\ref{vkeqmainteleestimate}).
\end{list}

\section{Completing the proof of the main theorem}
\label{vksec4maintheorem}

To establish the global estimate we adapt the approach by Thiele from \cite{T1}.
\begin{proof}[Proof of Theorem \ref{vktheoremmainbound}.]
Fix a positive integer $N$ and consider only squares with sidelength at least $2^{-N}$,
$$ \mathcal{C}_{N} := \Big\{Q=I\times J\in \mathcal{C} \,:\, |I|=|J|\geq 2^{-N} \Big\} . $$
We prove the bound
\begin{equation}
\label{vkeqthmproof0}
\sum_{Q\in\mathcal{C}_N} \!|Q| \,\big|\mathcal{A}_{Q}\big((F_{i,j})_{(i,j)\in E}\big)\big|
\ \lesssim_{m,n,(p_{i,j})} \prod_{(i,j)\in E}\!\! \|F_{i,j}\|_{\mathrm{L}^{p_{i,j}}(\mathbb{R}^2)} ,
\end{equation}
with the implicit constant independent of $N$, so that it implies the result for the whole collection $\mathcal{C}$.
Using homogeneity, this time we normalize
$$ \|F_{i,j}\|_{\mathrm{L}^{p_{i,j}}(\mathbb{R}^2)}=1\quad \textrm{for every $(i,j)\in E$}. $$

For each $|E|$-tuple of integers $\mathbf{k}=(k_{i,j})_{(i,j)\in E}\in\mathbb{Z}^{|E|}$ we denote
$$ \mathcal{P}_{\mathbf{k}} := \Big\{Q\in\mathcal{C}_{N} \,:\, 2^{k_{i,j}}\leq
\sup_{Q'\in\mathcal{C}_{N},\ Q'\supseteq Q}\!
\vkqqq{F_{i,j}^{d_{i,j}}}_{Q'}^{1/d_{i,j}} <\,2^{k_{i,j}+1} \,\textrm{ for every } (i,j)\in E \Big\} . $$
Note that squares in $\mathcal{P}_{\mathbf{k}}$ satisfy
\,$|Q|\leq 2^{-p_{i,j} (k_{i,j}-1)}$\, for any $(i,j)\in E$,
which limits their size from above.
To verify this, we take $Q\in\mathcal{P}_{\mathbf{k}}$ and choose $Q'\supseteq Q$ such that
$\vkqqq{F_{i,j}^{d_{i,j}}}_{Q'}^{1/d_{i,j}} > 2^{k_{i,j}-1}$.
By the monotonicity of normalized $\mathrm{L}^p$ norms
$$ 2^{k_{i,j}-1} < \vkqqq{F_{i,j}^{d_{i,j}}}_{Q'}^{1/{d_{i,j}}} \leq \vkqqq{F_{i,j}^{p_{i,j}}}_{Q'}^{1/{p_{i,j}}}
= |Q'|^{-1/p_{i,j}}\|F_{i,j}\|_{\mathrm{L}^{p_{i,j}}(Q')} \leq |Q'|^{-1/p_{i,j}} $$
and thus $|Q|\leq |Q'|\leq 2^{-p_{i,j} (k_{i,j}-1)}$.

Define $\mathcal{M}_{\mathbf{k}}$ to be the collection of maximal squares in $\mathcal{P}_{\mathbf{k}}$
with respect to the set inclusion.
For each $Q\in\mathcal{M}_{\mathbf{k}}$ the family
$$ \mathcal{T}_Q := \big\{ \widetilde{Q}\in \mathcal{P}_{\mathbf{k}} \, : \, \widetilde{Q}\subseteq Q \big\} $$
is a finite convex\footnote{Convexity of $\mathcal{T}_Q$ follows from monotonicity
of $\widetilde{Q}\mapsto\sup_{Q'\in\mathcal{C}_{N},\, Q'\supseteq\widetilde{Q}}$.}
tree with root $Q$ and for different squares $Q\in\mathcal{M}_{\mathbf{k}}$ the corresponding trees
cover disjoint regions in the plane.
For $\widetilde{Q}\in\mathcal{T}_Q$ by the construction of $\mathcal{P}_{\mathbf{k}}$ we have
$\vkqqq{F_{i,j}^{d_{i,j}}}_{\widetilde{Q}}^{1/d_{i,j}} < 2^{k_{i,j}+1}$.
Also, if $\widetilde{Q}\in\mathcal{L}(\mathcal{T}_Q)$ and $\overline{Q}$ is the parent of $\widetilde{Q}$, then
$\vkqqq{F_{i,j}^{d_{i,j}}}_{\widetilde{Q}}^{1/d_{i,j}} \leq
\ 4\vkqqq{F_{i,j}^{d_{i,j}}}_{\overline{Q}}^{1/d_{i,j}} < 2^{k_{i,j}+3}$.
Therefore, Proposition \ref{vkpropositiontree} gives
$$ \Lambda_{\mathcal{T}_Q}\big((F_{i,j})_{(i,j)\in E}\big)
\ \lesssim_{m,n} |Q|\ 2^{\sum_{(i,j)\in E} k_{i,j}} . $$
We decompose\footnote{Here we use that for each
$\widetilde{Q}\in\mathcal{C}_{N}\setminus\bigcup_{\mathbf{k}\in\mathbb{Z}^{|E|}}\mathcal{P}_{\mathbf{k}}$
at least one of the functions constantly vanishes on $\widetilde{Q}$,
so the corresponding summand is equal to $0$.}
and estimate,
\begin{align}
\sum_{Q\in\mathcal{C}_N} \!|Q| \,\big|\mathcal{A}_{Q}\big((F_{i,j})_{(i,j)\in E}\big)\big|\
& = \ \sum_{\mathbf{k}\in\mathbb{Z}^{|E|}}\,\sum_{Q\in\mathcal{M}_{\mathbf{k}}}
\Lambda_{\mathcal{T}_Q}\big((F_{i,j})_{(i,j)\in E}\big) \nonumber \\
\label{vkeqthmproof1}
& \lesssim_{m,n} \sum_{\mathbf{k}\in\mathbb{Z}^{|E|}} 2^{\sum_{(i,j)\in E} k_{i,j}}
\Big(\sum_{Q\in\mathcal{M}_{\mathbf{k}}}\!\! |Q|\Big) .
\end{align}

For any $d\geq 1$ it is a classical result\footnote{One simply writes
$\mathrm{M}_{d} F = \big(\mathrm{M}_{1} |F|^d\big)^{1/d}$, where
$\mathrm{M}_{1}$ is the standard dyadic maximal function.}
that ``power $d$ variant'' of the \emph{dyadic maximal function}
$$ (\mathrm{M}_{d} F)(x,y) := \!\sup_{Q\in\mathcal{C},\ Q\ni (x,y)}
\!\vkqqq{|F|^d}_{Q}^{1/d} $$
is bounded on $\mathrm{L}^p(\mathbb{R}^2)$ whenever $d<p\leq\infty$.
We have $\mathbb{Z}^{|E|} =\bigcup_{(i_0,j_0)\in E}\mathcal{K}_{(i_0,j_0)}$,
where the subsets $\mathcal{K}_{(i_0,j_0)}$ are defined by
$$ \mathcal{K}_{(i_0,j_0)} := \big\{ \mathbf{k}=(k_{i,j})_{(i,j)\in E} \,:\,
p_{i_0,j_0}k_{i_0,j_0}\geq p_{i,j}k_{i,j}\textrm{ for every }(i,j)\in E \big\} . $$
Observe that for $(x,y)\in Q\in\mathcal{P}_{\mathbf{k}}$ we have by the definition of $\mathcal{P}_{\mathbf{k}}$,
$$ (\mathrm{M}_{d_{i_0,j_0}}\!F_{i_0,j_0})(x,y) \geq \sup_{Q'\supseteq Q,\ Q'\in\mathcal{C}_{N}}\!
\vkqqq{F_{i_0,j_0}^{d_{i_0,j_0}}}_{Q'}^{1/d_{i_0,j_0}} \geq 2^{k_{i_0,j_0}} , $$
which together with disjointness of $\mathcal{M}_{\mathbf{k}}$ yields
\begin{equation}
\label{vkeqthmproof2}
\sum_{Q\in\mathcal{M}_{\mathbf{k}}}\!\! |Q| \,=\, \Big|\!\bigcup_{Q\in\mathcal{M}_{\mathbf{k}}}\!\! Q\Big|
\,\leq\,\big|\big\{(x,y)\in\mathbb{R}^2 : (\mathrm{M}_{d_{i_0,j_0}}\!F_{i_0,j_0})(x,y) \geq 2^{k_{i_0,j_0}}\big\}\big|
\end{equation}
for any choice of $(i_0,j_0)\in E$.

Combining (\ref{vkeqthmproof1}) and (\ref{vkeqthmproof2}) allows the final computation:
\begin{align*}
& \sum_{Q\in\mathcal{C}_N}\! |Q| \,\big|\mathcal{A}_{Q}\big((F_{i,j})_{(i,j)\in E}\big)\big|
\ \lesssim_{m,n} \sum_{(i_0,j_0)\in E} \ \sum_{\mathbf{k}\in\mathcal{K}_{(i_0,j_0)}} 2^{\sum_{(i,j)\in E} k_{i,j}}
\ \big|\big\{\mathrm{M}_{d_{i_0,j_0}}\!F_{i_0,j_0} \geq 2^{k_{i_0,j_0}}\big\}\big| \\
& \ = \sum_{(i_0,j_0)\in E} \, \sum_{k_{i_0,j_0}\in\mathbb{Z}}
2^{p_{i_0,j_0}k_{i_0,j_0}}\,\big|\big\{\mathrm{M}_{d_{i_0,j_0}}\!F_{i_0,j_0} \geq 2^{k_{i_0,j_0}}\big\}\big|
\prod_{\substack{(i,j)\in E\\ (i,j)\neq (i_0,j_0)}}
\ \sum_{\substack{k_{i,j}\in\mathbb{Z}\\ k_{i,j}\leq\frac{p_{i_0,j_0}k_{i_0,j_0}}{p_{i,j}}}}
\!\!\!\!\!\!\!\! 2^{\,k_{i,j}-\frac{p_{i_0,j_0}k_{i_0,j_0}}{p_{i,j}}} \\[-2mm]
& \ \lesssim_{|E|} \sum_{(i_0,j_0)\in E} \, \sum_{k_{i_0,j_0}\in\mathbb{Z}}
2^{p_{i_0,j_0}k_{i_0,j_0}}\,\big|\big\{\mathrm{M}_{d_{i_0,j_0}}\!F_{i_0,j_0} \geq 2^{k_{i_0,j_0}}\big\}\big| \\
& \ \lesssim_{(p_{i,j})} \sum_{(i_0,j_0)\in E}
\|\mathrm{M}_{d_{i_0,j_0}}\!F_{i_0,j_0}\|_{\mathrm{L}^{p_{i_0,j_0}}(\mathbb{R}^2)}^{p_{i_0,j_0}}
\ \lesssim_{(d_{i,j}),\,(p_{i,j})} \sum_{(i_0,j_0)\in E}
\|F_{i_0,j_0}\|_{\mathrm{L}^{p_{i_0,j_0}}(\mathbb{R}^2)}^{p_{i_0,j_0}}
\ \lesssim_{|E|} \ 1 ,
\end{align*}
which is exactly (\ref{vkeqthmproof0}).
We used \,$\sum_{(i,j)\in E}\frac{1}{p_{i,j}}=1$\, and
added up $|E|\!-\!1$ geometric series with initial terms in $(\frac{1}{2},1]$ and common ratios
equal to $\frac{1}{2}$.
\end{proof}

\section{An example that illustrates the proof}
\label{vksec5example}

In this short section we show how important steps of the proof
should be performed on a concrete example given in Figure \ref{vkfigureexample}.
\vspace*{-3mm}
\begin{figure}[th]
\begin{center}\includegraphics[width=0.362\textwidth]{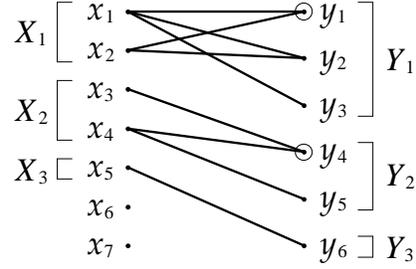}\end{center}
\vspace*{-4mm}
\caption{An example of a bipartite graph and its splitting.}
\label{vkfigureexample}
\end{figure}

\noindent
The example has parameters
\begin{align*}
& m=7,\ n=6,\ E=\big\{(1,1),(1,2),(1,3),(2,1),(2,2),(3,4),(4,4),(4,5),(5,6)\big\}, \\
& S=\emptyset,\ T=\{1,4\},\ d_{1,1}\!=\!d_{1,2}\!=\!d_{1,3}\!=\!d_{2,1}\!=\!d_{2,2}\!=\!3,\
d_{3,4}\!=\!d_{4,4}\!=\!d_{4,5}\!=\!2,\ d_{5,6}\!=\!1 ;
\end{align*}
the splitting into connected components is
$$ \mathcal{X}_1=\{1,2\},\ \mathcal{Y}_1=\{1,2,3\},\ \mathcal{X}_2=\{3,4\},\ \mathcal{Y}_2=\{4,5\},\
\mathcal{X}_3=\{5\},\ \mathcal{Y}_3=\{6\} ; $$
and the forms we associate to this graph decompose as
$$ \Lambda_\mathcal{T} = \sum_{Q\in\mathcal{T}} \,|Q|\,\big|\mathcal{A}_{Q}^{(1)}\big|
\,\big|\mathcal{A}_{Q}^{(2)}\big|\,\big|\mathcal{A}_{Q}^{(3)}\big| , $$
where
\begin{align*}
\mathcal{A}^{(1)} & = \vkqqq{\vkwww{F_{1,1}(x_1,y_1)F_{1,2}(x_1,y_2)F_{1,3}(x_1,y_3)F_{2,1}(x_2,y_1)F_{2,2}(x_2,y_2)}_{y_1}}_{x_1,x_2,y_2,y_3}, \\
\mathcal{A}^{(2)} & = \vkqqq{\vkwww{F_{3,4}(x_3,y_4)F_{4,4}(x_4,y_4)F_{4,5}(x_4,y_5)}_{y_4}}_{x_3,x_4,y_5}, \\
\mathcal{A}^{(3)} & = \vkqqq{F_{5,6}(x_5,y_6)}_{x_5,y_6}.
\end{align*}

In order to deal with more symmetric situation, we ``complete'' the graph
by introducing $F_{2,3}\equiv\mathbf{1}$ and $F_{3,5}\equiv\mathbf{1}$.
Since $|T|=2$, we decide to rewrite
{\small\begin{align*}
\mathcal{A}^{(1)} & = \vkqqqq{
\vkwww{F_{1,1}(x_1,y_1)F_{2,1}(x_2,y_1)}_{y_1}\vkqqq{F_{1,2}(x_1,y_2)F_{2,2}(x_2,y_2)}_{y_2}
\vkqqq{F_{1,3}(x_1,y_3)F_{2,3}(x_2,y_3)}_{y_3}
}_{x_1,x_2}, \\
\mathcal{A}^{(2)} & = \vkqqqq{
\vkwww{F_{3,4}(x_3,y_4)F_{4,4}(x_4,y_4)}_{y_4}
\vkqqq{F_{3,5}(x_3,y_5)F_{4,5}(x_4,y_5)}_{y_5}}_{x_3,x_4}
\end{align*}}
and start by estimating the general term as in Subsection \ref{vksubsectionfull}, Case 2,
{\small\begin{align*}
|\mathcal{A}| & = |\mathcal{A}^{(1)}|\, |\mathcal{A}^{(2)}|\, |\mathcal{A}^{(3)}|
\leq \frac{1}{2}\big(\mathcal{A}^{(1)}\big)^2 + \frac{1}{2}\big(\mathcal{A}^{(2)}\big)^2 \\
& \leq\ \frac{1}{2}\underbrace{\vkqqqq{\vkwww{F_{1,1}(x_1,y_1)F_{2,1}(x_2,y_1)}_{y_1}^2
\vkqqq{F_{1,3}(x_1,y_3)F_{2,3}(x_2,y_3)}_{y_3}}_{x_1,x_2}
}_{\ \mathcal{A}^{(4)}} \\
& \qquad\times\underbrace{\vkqqqq{\vkqqq{F_{1,2}(x_1,y_2)F_{2,2}(x_2,y_2)}_{y_2}^2
\vkqqq{F_{1,3}(x_1,y_3)F_{2,3}(x_2,y_3)}_{y_3}}_{x_1,x_2}
}_{\ \leq 1} \\
& \ \,+ \frac{1}{2}\underbrace{\vkqqqq{\vkwww{F_{3,4}(x_3,y_4)F_{4,4}(x_4,y_4)}_{y_4}^2}_{x_3,x_4}
}_{\ \mathcal{A}^{(5)}}
\ \underbrace{\vkqqqq{\vkqqq{F_{3,5}(x_3,y_5)F_{4,5}(x_4,y_5)}_{y_5}^2}_{x_3,x_4}}_{\ \leq 1}.
\end{align*}}
Let us proceed with $\mathcal{A}^{(4)}$, since $\mathcal{A}^{(5)}$
is in the third row of Table \ref{vktableexamples} and can be treated even more easily.
Following the approach from the proof of Lemma \ref{vklemmareduction}, Case 2, we define
{\small\begin{align*}
\mathcal{B}^{(4)} & = \vkqqqq{\vkqqq{F_{1,1}(x_1,y_1)F_{2,1}(x_2,y_1)}_{y_1}^2
\vkqqq{F_{1,3}(x_1,y_3)F_{2,3}(x_2,y_3)}_{y_3}}_{x_1,x_2} \\
& = \vkqqqq{\vkqqq{F_{1,1}(x_1,y_1)F_{1,1}(x_1,y'_1)F_{1,3}(x_1,y_3)}_{x_1}
\vkqqq{F_{2,1}(x_2,y_1)F_{2,1}(x_2,y'_1)F_{2,3}(x_2,y_3)}_{x_2}}_{y_1,y'_1,y_3}.
\end{align*}}
By Theorem \ref{vktheoremterms} every term in $\Box\mathcal{B}^{(4)}$ is either $\mathcal{A}^{(4)}$, or
{\small\begin{align*}
\mathcal{A}^{(6)} & \!= \vkqqqq{\vkwww{F_{1,1}(x_1,y_1)F_{2,1}(x_2,y_1)}_{y_1}
\vkqqq{F_{1,1}(x_1,y_1)F_{2,1}(x_2,y_1)}_{y_1}
\vkwww{F_{1,3}(x_1,y_3)F_{2,3}(x_2,y_3)}_{\!y_3}}_{x_1,x_2},
\end{align*}}
or is of the form
{\small\begin{align*}
\mathcal{A}^{(7)} & = \Big(\vkwww{F_{1,1}(x_1,y_1)F_{1,1}(x_1,y'_1)F_{1,3}(x_1,y_3)}_{x_1}
\vkwww{F_{2,1}(x_2,y_1)F_{2,1}(x_2,y'_1)F_{2,3}(x_2,y_3)}_{x_2}\Big)_{y_1,y'_1,y_3},
\end{align*}}
where parentheses $(\cdot)$ are understood as either $[\cdot]$ or $\langle\cdot\rangle$ for each of the variables
$y_1$,\,$y'_1$,\,$y_3$ independently.
We dominate $\mathcal{A}^{(6)}$, $\mathcal{A}^{(7)}$ as in the proof of Lemma \ref{vklemmareduction}, Case 1.
{\small\begin{align*}
|\mathcal{A}^{(6)}| & \leq \frac{1}{2\delta}\underbrace{\vkqqqq{\vkwww{F_{1,1}(x_1,y_1)F_{2,1}(x_2,y_1)}_{y_1}^2
\vkqqq{F_{1,1}(x_1,y_1)F_{2,1}(x_2,y_1)}_{y_1}}_{x_1,x_2}}_{\mathcal{A}^{(8)}} \\
& \ \ + \frac{\delta}{2}\underbrace{\vkqqqq{\vkwww{F_{1,3}(x_1,y_3)F_{2,3}(x_2,y_3)}_{y_3}^2
\vkqqq{F_{1,1}(x_1,y_1)F_{2,1}(x_2,y_1)}_{y_1}}_{x_1,x_2}}_{\mathcal{A}^{(9)}} \\
|\mathcal{A}^{(7)}| & \leq \ \,\frac{1}{2}
\underbrace{\vkqqqq{\vkwww{F_{1,1}(x_1,y_1)F_{1,1}(x_1,y'_1)F_{1,3}(x_1,y_3)}_{x_1}^2}_{y_1,y'_1,y_3}}_{\mathcal{A}^{(10)}} \\
& \ \ + \frac{1}{2}
\underbrace{\vkqqqq{\vkwww{F_{2,1}(x_2,y_1)F_{2,1}(x_2,y'_1)F_{2,3}(x_2,y_3)}_{x_2}^2}_{y_1,y'_1,y_3}}_{\mathcal{A}^{(11)}}
\end{align*}}
Since $\mathcal{A}^{(9)}$ is analogous to $\mathcal{A}^{(4)}$,
the algorithm loops (with a small ``weight'' $\delta$) and we continue with $\mathcal{A}^{(8)}$ only.
Also, by symmetry, we can consider $\mathcal{A}^{(10)}$ and disregard $\mathcal{A}^{(11)}$.

To control $\mathcal{A}^{(8)}$ we define
{\small\begin{align*}
\mathcal{B}^{(8)} & = \vkqqqq{\vkqqq{F_{1,1}(x_1,y_1)F_{2,1}(x_2,y_1)}_{y_1}^3}_{x_1,x_2} \\
& = \vkqqqq{\vkqqq{F_{1,1}(x_1,y_1)F_{1,1}(x_1,y'_1)F_{1,1}(x_1,y''_1)}_{x_1}
\vkqqq{F_{2,1}(x_2,y_1)F_{2,1}(x_2,y'_1)F_{2,1}(x_2,y''_1)}_{x_2}}_{y_1,y'_1,y''_1}.
\end{align*}}
Every term in $\Box\mathcal{B}^{(8)}$ different from $\mathcal{A}^{(8)}$ is of the shape
{\small\begin{align*}
\mathcal{A}^{(12)} & \!= \Big(\vkwww{F_{1,1}(x_1,y_1)F_{1,1}(x_1,y'_1)F_{1,1}(x_1,y''_1)}_{x_1}
\vkwww{F_{2,1}(x_2,y_1)F_{2,1}(x_2,y'_1)F_{2,1}(x_2,y''_1)}_{x_2}\Big)_{y_1,y'_1,y''_1}
\end{align*}}
and is estimated as
{\small\begin{align*}
|\mathcal{A}^{(12)}| & \leq \ \frac{1}{2}
\underbrace{\vkqqqq{\vkwww{F_{1,1}(x_1,y_1)F_{1,1}(x_1,y'_1)F_{1,1}(x_1,y''_1)}_{x_1}^2}_{y_1,y'_1,y''_1}}_{\mathcal{A}^{(13)}} \\
& \ \ +\frac{1}{2}\vkqqqq{\vkwww{F_{2,1}(x_2,y_1)F_{2,1}(x_2,y'_1)F_{2,1}(x_2,y''_1)}_{x_2}^2}_{y_1,y'_1,y''_1}.
\end{align*}}
Finally, $\mathcal{A}^{(13)}$ corresponds to a partition with the smallest rank.
We introduce
{\small\begin{align*}
\mathcal{B}^{(13)} & = \vkqqqq{\vkqqq{F_{1,1}(x_1,y_1)F_{1,1}(x_1,y'_1)F_{1,1}(x_1,y''_1)}_{x_1}^2}_{y_1,y'_1,y''_1}
= \vkqqqq{\vkqqq{F_{1,1}(x_1,y_1)F_{1,1}(x'_1,y_1)}_{y_1}^3}_{x_1,x'_1}
\end{align*}}
and observe that by Theorem \ref{vktheoremterms} each term in $\Box\mathcal{B}^{(13)}$ different from
$\mathcal{A}^{(13)}$ is of the form
$$ \Big(\vkwww{F_{1,1}(x_1,y_1)F_{1,1}(x'_1,y_1)}_{y_1}^2
\vkqqq{F_{1,1}(x_1,y_1)F_{1,1}(x'_1,y_1)}_{y_1}\Big)_{x_1,x'_1}. $$
Sum of these terms is nonnegative by Lemma \ref{vklemmaterms}
and can be discarded as it only increases $\Box\mathcal{B}^{(13)}$.

On the other hand, to deal with $\mathcal{A}^{(10)}$ we define
{\small\begin{align*}
\mathcal{B}^{(10)} & =
\vkqqqq{\vkqqq{F_{1,1}(x_1,y_1)F_{1,1}(x_1,y'_1)F_{1,3}(x_1,y_3)}_{x_1}^2}_{y_1,y'_1,y_3} \\
& = \vkqqqq{\vkqqq{F_{1,1}(x_1,y_1)F_{1,1}(x'_1,y_1)}_{y_1}^2
\vkqqq{F_{1,3}(x_1,y_3)F_{1,3}(x'_1,y_3)}_{y_3}}_{x_1,x'_1}.
\end{align*}}
Every term in $\Box\mathcal{B}^{(10)}$ other than $\mathcal{A}^{(10)}$
takes either the shape
{\small $$ \Big(\vkwww{F_{1,1}(x_1,y_1)F_{1,1}(x'_1,y_1)}_{y_1}^2
\vkqqq{F_{1,3}(x_1,y_3)F_{1,3}(x'_1,y_3)}_{y_3}\Big)_{x_1,x'_1}, $$}
or the shape
{\small\begin{align*}
& \Big(\vkwww{F_{1,1}(x_1,y_1)F_{1,1}(x'_1,y_1)}_{y_1}
\vkqqq{F_{1,1}(x_1,y_1)F_{1,1}(x'_1,y_1)}_{y_1}
\vkwww{F_{1,3}(x_1,y_3)F_{1,3}(x'_1,y_3)}_{y_3}\Big)_{x_1,x'_1}.
\end{align*}}
Sum of the former terms is nonnegative by Lemma \ref{vklemmaterms},
while the latter ones are treated similarly as $\mathcal{A}^{(6)}$.

A concrete Bellman function $\mathcal{B}$ satisfying $|\mathcal{A}|\leq\Box\mathcal{B}$
and (\ref{vkeqbcontrol}) under Normalization (\ref{vkeqfnormalize}) can be
assembled as a linear combination of all averaging paraproduct-type terms that appear in the proof,
including the omitted ones.
Estimate (\ref{vkeqmainteleestimate}) gives (\ref{vkeqsingletreenormalized}) once again.

\section{Several remarks}
\label{vksec6remarks}

\subsection{Comments on the exponent range}

Already the twisted paraproduct shows that the range of exponents in (\ref{vkeqlpestimate}) is not optimal.
Our restriction of the range comes from the fact that the proof of (\ref{vkeqsingletree})
for a single connected component gradually reduces the term $\mathcal{A}$
to paraproduct-type terms with ``more and more'' repeating functions.
In the worst case we end up with only one function, say $F_{1,1}$, appearing $mn$ times.
This means that the byproduct of such proof is
$$ \sum_{Q\in\mathcal{T}} |Q| \,\big|\mathcal{A}_{Q}(\underbrace{F_{1,1},F_{1,1},\ldots,F_{1,1}}_{mn})\big|
\ \lesssim_{m,n} |Q_\mathcal{T}| \,\Big(\max_{Q\in\mathcal{T}\cup\mathcal{L}(\mathcal{T})}
\vkqqq{F_{1,1}^{d}}_{Q}^{1/d}\Big)^{mn} , $$
where
$$ \mathcal{A}_{I\times J}(F_{1,1},\ldots,F_{1,1}) =
\vkqqqq{\vkwwww{\prod_{\substack{1\leq i\leq m\\ 1\leq j\leq n}}\!\! F_{1,1}(x_i,y_j)
}_{\substack{x_i\in I \textrm{ for } i\in S\\ y_j\in J \textrm{ for } j\in T}\,}
}_{\substack{x_i\in I \textrm{ for } i\in S^c\\ y_j\in J \textrm{ for } j\in T^c}} $$
and $d=d_{1,1}$.
Taking $\mathcal{T}$ to be a single square $\big\{[0,2)^2\big\}$
and $F_{1,1}$ to be a nonnegative elementary tensor $F_{1,1}(x,y)=f(x)g(y)$ supported on $[0,1)^2$
we conclude
$$ \vkqqq{f^n}_{[0,1)}^m \vkqqq{g^m}_{[0,1)}^n \,\lesssim_{m,n}
\Big(\vkqqq{f^{d}}_{[0,1)}^{1/d} \vkqqq{g^{d}}_{[0,1)}^{1/d}\Big)^{mn} , $$
which is only possible if $d\geq\max\{m,n\}$.

However, the general method based on Section \ref{vksec2bellmanfunctions} does not share those limitations
and one can possibly find a Bellman function $\mathcal{B}$ that proves (\ref{vkeqsingletree})
with smaller exponents $d_{i,j}$.
In some particular cases the procedure of reducing the general term
to the ``simpler'' ones can be employed recursively rather than inductively,
thus avoiding to deal with unnecessary paraproduct-type terms.

As an example consider a cycle of length $4n$, i.e.\@
$$ E=\big\{(1,1),(1,2),(2,2),(2,3),(3,3),(3,4),\ldots,(2n,2n),(2n,1)\big\} $$
and suppose $1,2\in S$.
We can estimate the general term by separating brackets $\vkq{\cdot}_{x_i}$ or $\vkw{\cdot}_{x_i}$ for odd and even $i$:
\begin{align*}
|\mathcal{A}|\, & \leq \ \,\frac{1}{2}
\vkqq{\vkw{F_{1,1}(x_1,y_1)F_{1,2}(x_1,y_2)}_{x_1}^2}_{y_1,y_2}\!
\vkqq{\vkq{F_{3,3}(x_3,y_3)F_{3,4}(x_3,y_4)}_{x_3}^2}_{y_3,y_4}\ldots \\
& \ \ +\frac{1}{2}
\vkqq{\vkw{F_{2,2}(x_2,y_2)F_{2,3}(x_2,y_3)}_{x_2}^2}_{y_2,y_3}\!
\vkqq{\vkq{F_{4,4}(x_4,y_4)F_{4,5}(x_4,y_5)}_{x_4}^2}_{y_4,y_5}\ldots \,.
\end{align*}
This gives (\ref{vkeqsingletree}) with $d_{i,j}=2$
and consequently also (\ref{vkeqlpestimate}) in the range \,$2\!<\!p_{i,j}\!<\!\infty$,\,
rather than \,$2n\!<\!p_{i,j}\!<\!\infty$.

Determining the full range of exponents $(p_{i,j})_{(i,j)\in E}$
for which (\ref{vkeqlpestimate}) holds remains an open problem.
The only counterexamples we know are at the boundary of the Banach simplex,
see \cite{K1}.
Certain extensions are possible in the class of mixed-norm $\mathrm{L}^p$ spaces,
but they still face similar restrictions.

\subsection{A remark on non-bipartite graphs}

We close this section by commenting on our setup.
Even though it is possible to associate multilinear forms to general undirected graphs,
the method cannot be applied to forms arising from graphs that are not bipartite.
For instance, a multilinear form associated to a \emph{triangle},
i.e.\@ a cycle of length $3$, could be
$$ \Lambda_{\triangle}(F,G,H)
=\!\sum_{\substack{I,J,K\in\mathcal{D}\\ |I|=|J|=|K|\\ c(I,J,K)=0}}\!
|I|^{1/2}\!\int_{\mathbb{R}^3}\! F(x,y) G(y,z) H(z,x)
\,\psi^{\mathrm{D}}_{I}(x) \psi^{\mathrm{D}}_{J}(y) \psi^{\mathrm{D}}_{K}(z) \,dx dy dz , $$
where \,$c(I,J,K)=0$\, is some constraint, making the sum effectively indexed by only two of the intervals.
Such forms seem to share many characteristics with the
\emph{two-dimensional ``triangular'' Hilbert transform},
$$ \Lambda_{\triangle\mathrm{BHT}}(F,G,H)
=\int_{\mathbb{R}^2}\Big(\mathrm{p.v.}\int_{\mathbb{R}}F(x-t,y)\,G(x,y-t)\,\frac{dt}{t}\Big) \,H(x,y) \,dx dy , $$
arising from ``singular bilinear averages'' introduced in \cite{DT}
and for which no $\mathrm{L}^p$ bounds are known at the time of writing.

\section*{Acknowledgments}

I am indebted to Professor Christoph Thiele, my advisor at UCLA,
for his invaluable support while this research was performed.
I would like to thank Professor John Garnett
for correcting a historical reference on paraproducts.
I am also grateful to Professor Alexander Volberg
for a useful discussion at a very early stage of this project.

\begin{bibdiv}
\begin{biblist}

\bib{Be}{article}{
author={F. Bernicot},
title={Fiber-wise Calder\'{o}n-Zygmund decomposition and application to a bi-dimen\-sional paraproduct},
journal={Illinois J. Math.},
volume={},
date={},
number={},
pages={},
note={To appear in print},
eprint={arXiv:1011.3614v1 [math.CA]}
}

\bib{Be2}{article}{
author={F. Bernicot},
title={Uniform estimates for paraproducts and related multilinear multipliers},
journal={Rev. Mat. Iberoam.},
volume={25},
date={2009},
number={3},
pages={1055--1088},
note={},
eprint={}
}

\bib{Bn}{article}{
author={J.-M. Bony},
title={Calcul symbolique et propagation des singularit\'{e}s
pour les \'{e}quations aux d\'{e}riv\'{e}es partielles non lin\'{e}aires},
journal={Ann. Sci. \'{E}cole Norm. Sup.},
language={French},
volume={14},
date={1981},
number={2},
pages={209--246},
note={},
eprint={}
}

\bib{Bu}{article}{
author={D. L. Burkholder},
title={Boundary value problems and sharp inequalities for martingale transforms},
journal={Ann. Probab.},
volume={12},
date={1984},
number={3},
pages={647-–702},
note={},
eprint={}
}

\bib{DT}{article}{
author={C. Demeter},
author={C. Thiele},
title={On the two-dimensional bilinear Hilbert transform},
journal={Amer. J. Math.},
volume={132},
date={2010},
number={1},
pages={201--256},
note={},
eprint={}
}

\bib{K1}{article}{
author={V. Kova\v{c}},
title={Boundedness of the twisted paraproduct},
journal={Rev. Mat. Iberoam.},
volume={},
date={},
number={},
pages={},
note={To appear in print},
eprint={arXiv:1011.6140v1 [math.CA]}
}

\bib{MTT4}{article}{
author={C. Muscalu},
author={T. Tao},
author={C. Thiele},
title={Uniform estimates on paraproducts},
journal={J. Anal. Math.},
volume={87},
date={2002},
number={},
pages={369--384},
note={},
eprint={}
}

\bib{NT}{article}{
author={F. Nazarov},
author={S. Treil},
title={The hunt for a Bellman function: applications to estimates
for singular integral operators and to other classical problems of
harmonic analysis},
journal={Algebra i Analiz},
language={Russian},
volume={8},
date={1996},
number={5},
pages={32--162},
note={},
eprint={},
translation={
journal={St. Petersburg Math. J.},
volume={8},
date={1997},
number={5},
pages={721--824}
}}

\bib{NTV2}{article}{
author={F. Nazarov},
author={S. Treil},
author={A. Volberg},
title={Bellman function in stochastic control and harmonic analysis},
journal={Oper. Theory Adv. Appl.},
volume={129},
date={2001},
number={},
pages={393--423},
note={},
eprint={},
book={
title={Systems, approximation, singular integral operators, and related topics (Bordeaux, 2000)},
publisher={Birkh\"{a}user Verlag, Basel}
}}

\bib{NTV1}{article}{
author={F. Nazarov},
author={S. Treil},
author={A. Volberg},
title={The Bellman functions and two-weight inequalities for Haar multipliers},
journal={J. Amer. Math. Soc.},
volume={12},
date={1999},
number={4},
pages={909–-928},
note={},
eprint={}
}

\bib{P}{article}{
author={C. Pommerenke},
title={Schlichte Funktionen und analytische Funktionen von beschr\"{a}nkter mittlerer Oszillation},
journal={Comment. Math. Helv.},
language={German},
volume={52},
date={1977},
number={4},
pages={591--602},
note={},
eprint={}
}

\bib{S}{article}{
author={I. D. Shkredov},
title={On a problem of Gowers},
journal={Izv. Ross. Akad. Nauk Ser. Mat.},
language={Russian},
volume={70},
date={2006},
number={2},
pages={179--221},
note={},
eprint={},
translation={
journal={Izv. Math.},
volume={70},
date={2006},
number={2},
pages={385--425}
}}

\bib{St}{book}{
author={E. M. Stein},
title={Harmonic analysis: Real-Variable Methods, Orthogonality, and Oscillatory Integrals},
publisher={Princeton Univ. Press, Princeton, NJ},
date={1993}
}

\bib{T1}{article}{
author={C. Thiele},
title={Time-frequency analysis in the discrete phase plane \emph{(Ph.D. thesis, Yale Univ., 1995)}},
book={
title={Topics in analysis and its applications},
publisher={World Sci. Publ., River Edge, NJ},
date={2000},
pages={99--152}
}}

\bib{T2}{book}{
author={C. Thiele},
title={Wave Packet Analysis, CBMS Reg. Conf. Ser. Math.},
publisher={AMS, Providence, RI},
number={105},
date={2006}
}

\end{biblist}
\end{bibdiv}

\end{document}